\definecolor{bronze}{rgb}{0.8, 0.5, 0.2}
\numberwithin{equation}{section}
\newtheorem{theorem}{Theorem}[section]
\theoremstyle{plain}
\newtheorem{lemma}[theorem]{Lemma}
\theoremstyle{plain}
\newtheorem{proposition}[theorem]{Proposition}
\theoremstyle{plain}
\theoremstyle{definition}
\newtheorem{remark}[theorem]{Remark}
\newtheorem{example}[theorem]{Example}
\newcommand{\N}{{\mathbb N}}
\newcommand{\R}{{\mathbb R}}
\newcommand{\eps}{\varepsilon}
\newcommand{\beq}{\begin{equation}}
\newcommand{\eeq}{\end{equation}}
\renewcommand{\le}{\leqslant}
\renewcommand{\ge}{\geqslant}
\newcommand{\w}{W^{s,p}_0(\Omega)}
\newcommand{\fpl}{(-\Delta)_p^s\,}
\newcommand{\cs}{C^0_s(\overline\Omega)}
\newcommand{\ds}{{\rm d}_\Omega^s}
\newcommand{\leqnomode}{\tagsleft@true}
\newcommand{\reqnomode}{\tagsleft@false}
\newenvironment{enumroman}{\begin{enumerate}

}{\end{enumerate}}
\title[Positive solutions for fractional $p$-Laplacian]{Positive solutions for the fractional $p$-Laplacian\\ via mixed topological and variational methods}
\author[A.\ Iannizzotto]{Antonio Iannizzotto}
\address[A.\ Iannizzotto]{Dipartimento di Matematica e Informatica
\newline\indent
Universit\`a di Cagliari
\newline\indent
Via Ospedale 72, 09124 Cagliari, Italy}
\email{antonio.iannizzotto@unica.it}
\subjclass[2010]{35R11, 47H11, 35A15.}
\keywords{Fractional $p$-Laplacian, Degree theory, Multiple solutions.}
\begin{document}

\begin{abstract}
We study a nonlinear, nonlocal Dirichlet problem driven by the degenerate fractional $p$-Laplacian via a combination of topological methods (degree theory for operators of monotone type) and variational methods (critical point theory). We assume local conditions ensuring the existence of sub- and supersolutions. So we prove existence of two positive solutions, in both the coercive and noncoercive cases.
\end{abstract}

\maketitle

\begin{center}
Version of \today\
\end{center}

\section{Introduction}\label{sec1}

\noindent
In the study of nonlinear elliptic partial differential equations, two main approaches have been followed in the past decades: in the {\em variational} approach, solutions of the examined problem are seen as critical points of an energy functional and detected via minimization or min-max schemes; in the {\em topological} approach, solutions are seen as zeros of a nonlinear operator and are found via fixed point theorems or degree theory. The topological approach is more general, as it allows to deal with gradient-depending terms, but it usually provides less precise information about the number of solutions. Classical references on the variational and topological methods for nonlinear equations are \cite{S} and \cite{GD}, respectively.\vskip2pt
\noindent
When quasilinear equations are considered (think for instance of the $p$-Laplacian operator), both variational and topological methods are affected by technical difficulties: no Hilbertian structure is available, energy functionals fail to be twice differentiable, so the Leray-Schauder degree for nonlinear operators does not apply. Thus, the $p$-Laplace equation has been the benchmark for developing both a critical point theory for $C^1$-functionals on Banach spaces, and an effective degree theory for operators of monotone type mapping a Banach space into its dual, as well as useful combinations of the two methods, as in the very interesting paper \cite{MMP1}. See \cite{MMP} for a comprehensive account on such techniques.
\vskip2pt
\noindent
In the last decade, beside nonlinearity, {\em nonlocality} has come into play as a major feature, with the increasing interest in fractional order elliptic operators and the inherent new difficulties, which are mainly related to regularity, maximum and comparison principles, and the boundary behavior of solutions. The semilinear case, dealing with the fractional Laplacian or variants of it, is by now well established, see for instance \cite{MRS} for the variational approach. In the quasilinear case, namely for the fractional $p$-Laplacian, a purely variational approach for existence results was proposed in \cite{ILPS}, based on Morse theory and the spectral theory of \cite{LL}. Much has been achieved since then and, despite missing a full organic theory, we have several partial results providing a basic toolbox for the treatment of equations, at least in the degenerate case, i.e., when the summability exponent is $p>2$ (examples are found in \cite{FI} for the variational method and in \cite{FI1} for the topological method).
\vskip2pt
\noindent
In this paper, in some sense a companion work of \cite{FI1}, we aim at combining the variational and topological approaches, relying on recent regularity and comparison results, in order to prove multiplicity of positive solutions for a fractional $p$-Laplace equations under mild assumptions on the reaction. Precisely, we study the following fractional order nonlinear equation with Dirichlet condition:
\beq\label{dir}
\begin{cases}
\fpl u = f(x,u)& \text{in $\Omega$} \\
u=0 & \text{in $\Omega^c$.}
\end{cases}
\eeq
Here $\Omega\subset\R^N$ ($N\geq2$) is a bounded domain with $C^{1,1}$ boundary, $p \ge 2$, $s\in(0,1)$ s.t.\ $N>ps$, the leading operator is the fractional $p$-Laplacian, defined for all $u:\R^N\to\R$ smooth enough and all $x\in\R^N$ by
\[\fpl u(x)=2\lim_{\eps\to 0^+}\int_{B_\eps^c(x)}\frac{|u(x)-u(y)|^{p-2}(u(x)-u(y))}{|x-y|^{N+ps}}\,dy,\]
which for $p=2$ reduces to the fractional Laplacian and for $s\to 1^-$ converges to the classical $p$-Laplacian (up to multiplicative constants). Finally, $f:\Omega\times\R\to\R$ is a Carath\'eodory mapping subject to several growth conditions. We focus on asymptotically $(p-1)$-linear reactions both at infinity and at zero, i.e., we assume that the quotient
\[t\mapsto\frac{f(\cdot,t)}{t^{p-1}}\]
is bounded (uniformly in $\Omega$) as $t\to\infty,\,0$, respectively. The present work differs from \cite{FI1}, mainly because we do not assume a qualitatively different behavior of the quotient above at $\pm\infty$ and at $0$ (jumping reaction). So we distinguish two cases, roughly speaking:
\begin{itemize}[leftmargin=1cm]
\item[$(a)$] in the {\em coercive} case, the limits of the quotient above lie below the principal eigenvalue of the operator $\fpl$ with nonresonance on a positively measured subset of $\Omega$;
\item[$(b)$] in the {\em noncoercive} case, the same limits lie above the principal eigenvalue.
\end{itemize}
Note that in general we allow for the reaction to have exactly the same behavior at infinity and at zero, unlike in many similar works (i.e., we exclude the so-called concave-convex reactions). On the other hand, we shall assume some {\em local} conditions ensuring the existence of sub- and supersolutions, namely, in case $(a)$ we require that the reaction lies above a power $t^{q-1}$ ($q>p$) near the origin, and in case $(b)$ we require that it is nonpositive at some point $b>0$. Under such hypotheses, in both cases we prove that problem \eqref{dir} admits at least two positive solutions, the first being obtained as a local minimizer of the energy functional, and the second being detected through a degree theoretic argument (see Theorems \ref{coe} and \ref{nco} below). We are inspired by similar results from \cite{MMP1} for the local case ($s=1$). We remark that our results are new even in the semilinear framework $p=2$.
\vskip2pt
\noindent
The structure of the paper is the following: in Section \ref{sec2} we recall the degree theory for operators of class $(S)_+$; in Section \ref{sec3} we collect some general results on the fractional $p$-Laplacian and the related Dirichlet problem; in Section \ref{sec4} we prove our multiplicity result for the coercive case $(a)$; and in Section \ref{sec5} we deal with the noncoercive case $(b)$.
\vskip4pt
\noindent
{\bf Notation.} Throughout the paper, for any $A\subset\R^N$ we shall set $A^c=\R^N\setminus A$. For any two measurable functions $u,v:\Omega\to\R$, $u\le v$ in $\Omega$ will mean that $u(x)\le v(x)$ for a.e.\ $x\in\Omega$ (and similar expressions). The positive (resp., negative) part of $u$ is denoted $u^+$ (resp., $u^-$). Every function $u$ defined in $\Omega$ will be identified with its $0$-extension to $\R^N$. If $X$ is an ordered Banach space, then $X_+$ will denote its non-negative order cone. For all $r\in[1,\infty]$, $\|\cdot\|_r$ denotes the standard norm of $L^r(\Omega)$ (or $L^r(\R^N)$, which will be clear from the context). Moreover, $C$ will denote a positive constant (whose value may change case by case).

\section{Degree theory for $(S)_+$-maps}\label{sec2}

\noindent
For the reader's convenience, we recall here some basic notions and properties of Browder's topological degree for $(S)_+$-maps, introduced in \cite{B1} (we follow the general exposition of \cite[Section 4.3]{MMP}).
\vskip2pt
\noindent
Let $X$ be a separable, reflexive Banach space, $X^*$ be its dual space, and $U\subseteq X$. An operator $A:U\to X^*$ is an $(S)_+$-{\em map} if, for any sequence $(u_n)$ in $X$, $u_n\rightharpoonup u$ in $X$ and
\[\limsup_n\,\langle A(u_n),u_n-u\rangle \le 0\]
imply $u_n\to u$ (strongly) in $X$. Also, $A$ is {\em demicontinuous} if it is strong to weak$^*$ continuous. Note that, if $A$ is a demicontinuous $(S)_+$-map and $B:U\to X^*$ is a completely continuous map, then $A+B$ is a demicontinuous $(S)_+$-map as well. In particular, by classical functional analysis (Troyanski's renorming theorem) there exists a $(S)_+$-homeomorphism $\mathcal{F}:X\to X^*$ (duality map) s.t.\ for all $u\in X$
\[\|\mathcal{F}(u)\|^2 = \|u\|^2 = \langle\mathcal{F}(u),u\rangle.\]
Now consider a triple $(A,U,u^*)$ where $U\subset X$ is a bounded open set, $A:\overline{U}\to X^*$ is a demicontinuous $(S)_+$-map, and $u^*\in X^*\setminus A(\partial U)$. By separability, we can perform a Galerkin type approximation of $X$ by means of an increasing sequence $(X_n)$ of finite-dimensional subspaces. For all $n\in\N$ we set $U_n=U\cap X_n$ and define $A_n:\overline{U}_n\to X^*_n$ by setting for all $u\in\overline{U}_n$, $v\in X_n$
\[\langle A_n(u),v\rangle = \langle A(u),v\rangle\]
(for simplicity, we use the same notation for the duality pairings between $X_n^*$ and $X_n$, and between $X^*$ and $X$, respectively). By \cite[Proposition 4.38]{MMP} Brouwer's degree of the triple $(A_n,U_n,u^*)$ eventually stabilizes as $n\to\infty$, so we can define for some $n\in\N$ big enough
\[{\rm deg}_{(S)_+}(A,U,u^*) = {\rm deg}(A_n,U_n,u^*).\]
The integer-valued map $\deg_{(S_+)}$ inherits the main properties of Brouwer's degree:

\begin{proposition}\label{deg}
{\rm \cite[Theorem 4.42]{MMP}} Let $U \subset X$ be a bounded open set, $A: \overline{U} \to X^*$ be a demicontinuous $(S)_+$-map, $u^* \notin A(\partial U)$. Then:
\begin{enumroman}
\item \label{deg1} (normalization) if $u^* \in \mathcal{F}(U)$, then $\deg_{(S)_+} (\mathcal{F}, U, u^*) = 1$;
\item \label{deg2} (domain additivity) if $U= U_1 \cup U_2$, with $U_1, U_2 \subset X$ nonempty open sets s.t.\ $U_1 \cap U_2 = \emptyset$ and $u^* \notin A(\partial U_1 \cup \partial U_2)$, then
\[\deg_{(S)_+} (A, U, u^*)= \deg_{(S)_+} (A, U_1, u^*) + \deg_{(S)_+} (A, U_2, u^*);\]
\item \label{deg3} (excision) if $C \subset \overline{U}$ is closed s.t.\ $u^* \notin A(C)$, then 
\[\deg_{(S)_+} (A, U \setminus C, u^*)= \deg_{(S)_+} (A, U, u^*);\]
\item \label{deg4} (homotopy invariance) if $h: [0,1] \times \overline{U} \to X^*$ is a $(S)_+$-homotopy s.t.\ $u^* \notin h(t,\partial U)$ for all $t \in [0,1]$, then the function
\[t \mapsto \deg_{(S)_+} (h(t,\cdot), U, u^*)\]
is constant in $[0,1]$;
\item \label{deg5} (solution) if $\deg_{(S)_+} (A, U, u^*) \neq 0$, then there exists $u \in U$ s.t.\ $A(u)=u^*$;
\item \label{deg6} (boundary dependence) if $B: \overline{U} \to X^*$ is a demicontinuous $(S)_+$-map s.t.\ $A(u)=B(u)$ for all $u \in \partial U$, then
\[\deg_{(S)_+} (A, U, u^*)= \deg_{(S)_+} (B, U, u^*).\]
\end{enumroman}
\end{proposition}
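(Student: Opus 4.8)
The plan is to deduce each of the six properties from the corresponding property of Brouwer's degree by passing to the Galerkin approximations. Throughout I use the defining identity $\deg_{(S)_+}(A,U,u^*)=\deg(A_n,U_n,\restr{u^*}{X_n})$, valid for all large $n$ by \cite[Proposition 4.38]{MMP}; that $U_n=U\cap X_n$ is open in $X_n$; and that, for any \emph{open} $W\subseteq X$, the boundary of $W\cap X_n$ relative to $X_n$ is contained in $\partial W$, because $X_n$ is closed in $X$. The one extra ingredient I would isolate at the outset — essentially contained in the proof of \cite[Proposition 4.38]{MMP}, and relying precisely on the $(S)_+$ property together with the density of $\bigcup_nX_n$ in $X$ — is the following \emph{approximation property}: if $u_n\in\overline U\cap X_n$ and $A_n(u_n)=\restr{u^*}{X_n}$ for all $n$ in some infinite set, then, $\overline U$ being bounded, a subsequence of $(u_n)$ converges strongly in $X$ to a point $u\in\overline U$ with $A(u)=u^*$ (one tests the Galerkin identity against a sequence in $\bigcup_m X_m$ converging to the weak limit, deduces $\limsup_n\langle A(u_n),u_n-u\rangle\le0$, applies $(S)_+$, and then demicontinuity). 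Its immediate consequence, used repeatedly below, is that for \emph{any} admissible triple $(A,W,u^*)$ one has $\restr{u^*}{X_n}\notin A_n(\partial(W\cap X_n))$ for all large $n$ (since that relative boundary lies in $\partial W$), so all the finite-dimensional Brouwer degrees below are eventually well defined.

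Granting this, properties \ref{deg2}, \ref{deg3}, \ref{deg5} and \ref{deg6} are routine. For \ref{deg2}, write $U_n$ as the disjoint union of the open sets $U_1\cap X_n$ and $U_2\cap X_n$, whose relative boundaries lie in $\partial U_1\cup\partial U_2$, apply additivity of Brouwer's degree, and let $n\to\infty$. For \ref{deg3}, the triple $(A,U\setminus C,u^*)$ is admissible since $\partial(U\setminus C)\subseteq\partial U\cup C$, and $(U\setminus C)\cap X_n=U_n\setminus(C\cap\overline U_n)$ with $C\cap\overline U_n$ closed in $X_n$ and, by the approximation property, $\restr{u^*}{X_n}\notin A_n\bigl(\partial U_n\cup(C\cap\overline U_n)\bigr)$ for large $n$, so Brouwer's excision applies. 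For \ref{deg5}, if $\deg_{(S)_+}(A,U,u^*)\neq0$ then $\deg(A_n,U_n,\restr{u^*}{X_n})\neq0$ for large $n$, giving $u_n\in U_n$ with $A_n(u_n)=\restr{u^*}{X_n}$; by the approximation property a subsequence converges to some $u\in\overline U$ with $A(u)=u^*$, and $u\notin\partial U$ by admissibility, so $u\in U$. For \ref{deg6}, I would use the \emph{affine} finite-dimensional homotopy $h_n(t,\cdot)=(1-t)A_n+tB_n$: it is a legitimate Brouwer homotopy on $\overline U_n$ (merely continuous, which is all that is needed), and on $\partial U_n\subseteq\partial U$ it coincides with $A_n=B_n$, so $\restr{u^*}{X_n}\notin h_n(t,\partial U_n)$ for large $n$ by the approximation property; Brouwer's homotopy invariance then yields $\deg(A_n,U_n,\restr{u^*}{X_n})=\deg(B_n,U_n,\restr{u^*}{X_n})$, and one passes to the limit.

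For homotopy invariance \ref{deg4} the additional difficulty is that the restricted homotopies $h_n=\restr{h}{[0,1]\times(\overline U\cap X_n)}$ must be Brouwer-admissible \emph{uniformly} in $t\in[0,1]$ once $n$ is large. I would prove this by contradiction: a failure produces $n_k\to\infty$, parameters $t_{n_k}\to t$ (passing to a subsequence, by compactness of $[0,1]$) and points $u_{n_k}\in\partial U_{n_k}\subseteq\partial U$ with $h_{n_k}(t_{n_k},u_{n_k})=\restr{u^*}{X_{n_k}}$; the defining property of an $(S)_+$-homotopy, applied exactly as in the approximation property, forces $u_{n_k}\to u\in\partial U$ with $h(t,u)=u^*$, contradicting $u^*\notin h(t,\partial U)$. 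Once uniform admissibility holds, for each large $n$ the integer $c_n:=\deg(h_n(t,\cdot),U_n,\restr{u^*}{X_n})$ does not depend on $t$ (Brouwer's homotopy invariance), while for each fixed $t$ one has $c_n\to\deg_{(S)_+}(h(t,\cdot),U,u^*)$ as $n\to\infty$; since the limit of a $t$-independent sequence cannot depend on $t$, the function $t\mapsto\deg_{(S)_+}(h(t,\cdot),U,u^*)$ is constant.

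The normalization \ref{deg1} is the only step requiring a genuinely new computation. As $\mathcal F$ is a homeomorphism, $u^*=\mathcal F(u_0)$ for a unique $u_0\in U$. The restriction $\mathcal F_n$ is continuous, strictly monotone (being a restriction of the duality map, which is strictly monotone) and coercive (since $\langle\mathcal F_n(u),u\rangle=\|u\|^2$ on $X_n$), so the Galerkin equation $\mathcal F_n(u)=\restr{u^*}{X_n}$ has a unique solution $u_n\in X_n$; the approximation property gives $u_n\to u_0\in U$, whence $u_n\in U_n$ for large $n$. Fixing such an $n$, picking an inner product on $X_n$ with induced (positive-definite) linear isomorphism $\mathcal L\colon X_n\to X_n^*$, and considering the homotopy $H(\tau,u)=(1-\tau)\mathcal F_n(u)+\tau\bigl(\mathcal L(u-u_n)+\restr{u^*}{X_n}\bigr)$, one checks admissibility by pairing $H(\tau,u)-\restr{u^*}{X_n}$ with $u-u_n$: using $\restr{u^*}{X_n}=\mathcal F_n(u_n)$ this yields $(1-\tau)\langle\mathcal F(u)-\mathcal F(u_n),u-u_n\rangle+\tau\langle\mathcal L(u-u_n),u-u_n\rangle=0$, impossible for $u\neq u_n$ since both summands are nonnegative and not simultaneously zero. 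Hence $\deg(\mathcal F_n,U_n,\restr{u^*}{X_n})$ equals the degree at $\restr{u^*}{X_n}$ of $u\mapsto\mathcal L(u-u_n)+\restr{u^*}{X_n}$, which is $1$ because $\mathcal L$ is a positive-definite linear isomorphism and $u_n\in U_n$. The main obstacle in the whole argument is the transfer of admissibility from the infinite- to the finite-dimensional setting — and, for \ref{deg4}, its uniformity in $t$ — which is exactly where the $(S)_+$ condition enters; once the approximation property is available, everything else is bookkeeping with Brouwer's degree, apart from the classical monotone-map computation behind \ref{deg1}.
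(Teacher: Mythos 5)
The paper offers no proof of this proposition: it is quoted verbatim from \cite[Theorem 4.42]{MMP}, so there is no argument of the author's to compare yours against. What you have written is essentially the standard construction-based proof from that reference (going back to Browder): reduce to Brouwer's degree through the Galerkin approximations, with the $(S)_+$ condition entering exactly where you say it does, namely in the ``approximation property'' that transfers non-solvability on boundaries from $X$ to $X_n$ for large $n$, and in its uniform-in-$t$ version for \ref{deg4}. Your treatment of \ref{deg2}, \ref{deg3}, \ref{deg5}, \ref{deg6} is correct, and the observation that \ref{deg6} only needs the affine homotopy to be a \emph{Brouwer} homotopy on $\overline U_n$ (so that one never has to check it is an $(S)_+$-homotopy in $X$) is a genuine simplification worth keeping. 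The normalization computation for \ref{deg1} is also correct, granted strict monotonicity of $\mathcal F$, which does hold after Troyanski renorming (local uniform convexity of both norms), and the sign of the degree of the affine comparison map is $+1$ precisely because you orient $X_n^*$ via a positive-definite $\mathcal L$.

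The one point you should make explicit is a boundedness hypothesis. In your derivation of the approximation property you split $\langle A(u_n),u_n-u\rangle=\langle u^*,u_n-v_m\rangle+\langle A(u_n),v_m-u\rangle$ and need the second term to vanish as $m,n\to\infty$; this requires $\sup_n\|A(u_n)\|_{X^*}<\infty$, which does not follow from demicontinuity and $(S)_+$ alone. The same issue recurs when you claim that the $(S)_+$-homotopy condition forces $h(t_k,u_k)\rightharpoonup h(t,u)$ along strongly convergent $u_k\to u$ (you must first verify $\limsup_k\langle h(t_k,u_k),u_k-u\rangle\le 0$, which again needs boundedness of the values). In \cite{MMP} the degree is set up for \emph{bounded} demicontinuous $(S)_+$-maps and homotopies, so the gap is one of bookkeeping rather than substance, but as written your approximation lemma is not proved; state the boundedness assumption (or invoke it from the definition of the admissible class) and the argument closes.
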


\noindent
For our purposes, the most important property is \ref{deg4}. We recall that a $(S)_+$-{\em homotopy} is a map $h:[0,1]\times\overline{U}\to X^*$ s.t.\ $t_n \to t$ in $[0,1]$, $u_n \rightharpoonup u$ in $X$, and
\[\limsup_n\,\langle h(t_n,u_n), u_n-u\rangle \le 0\] 
imply $u_n\to u$ in $X$ and $h(t_n,u_n) \rightharpoonup h(t,u)$ in $X^*$. For instance, if $A,B:\overline{U}\to X^*$ are demicontinuous $(S)_+$-maps, then by \cite[Proposition 4.41]{MMP} we can define a $(S)_+$-homotopy as follows:
\[h(t,u) = (1-t)A(u)+tB(u).\]
The bridge between variational and topological methods is represented by the case of a functional $\Phi\in C^1(X)$, whose G\^ateaux derivative $\Phi':X\to X^*$ is a (demi-) continuous $(S)_+$-map ({\em potential operator}). We set
\[K(\Phi) = \big\{u\in X:\,\Phi'(u)=0\big\}.\]
In such cases, the degree of $\Phi'$ in some special sets is related to the local and asymptotic behavior of $\Phi$, respectively, as first proved in \cite{R} (for the Leray-Schauder degree):

\begin{proposition}\label{rab}
{\rm \cite[Corollaries 4.46, 4.49]{MMP}} Let $\Phi\in C^1(X)$ be a functional s.t.\ $\Phi':X\to X^*$ is a continuous $(S)_+$-map:
\begin{enumroman}
\item\label{rab1} if $u_0\in X$ is a local minimizer and an isolated critical point of $\Phi$, then for all $\rho>0$ small enough
\[{\rm deg}_{(S)_+}(\Phi',B_\rho(u_0),0) = 1;\]
\item\label{rab2} if $\Phi$ is coercive and $K(\Phi)$ is bounded, then for all $R>0$ big enough
\[{\rm deg}_{(S)_+}(\Phi',B_R(0),0) = 1.\]
\end{enumroman}
\end{proposition}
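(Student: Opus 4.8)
Both parts are the degree-theoretic counterparts of standard Morse-theoretic facts, and the plan is to deduce them from the Poincar\'e--Hopf type identity linking $\deg_{(S)_+}(\Phi',\cdot,0)$ with the critical groups of $\Phi$ — an identity which holds precisely because $\Phi'$ is a continuous $(S)_+$-map, so that the corresponding deformation lemma is available (see \cite{MMP}). Recall that for an isolated critical point $u$ at level $c=\Phi(u)$ one sets $C_k(\Phi,u)=H_k(\{\Phi\le c\}\cap V,(\{\Phi\le c\}\setminus\{u\})\cap V)$ for a small neighbourhood $V$ of $u$, and that $\deg_{(S)_+}(\Phi',B_\rho(u),0)=\sum_{k\ge0}(-1)^k\,\mathrm{rank}\,C_k(\Phi,u)$ for $\rho>0$ small; likewise, if $\Phi$ satisfies (PS) and $K(\Phi)$ is bounded, then $\deg_{(S)_+}(\Phi',B_R(0),0)=\sum_{k\ge0}(-1)^k\,\mathrm{rank}\,C_k(\Phi,\infty)$ for $R>0$ large, where $C_k(\Phi,\infty)=H_k(X,\{\Phi\le-a\})$ for $a$ large.

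For \ref{rab1}, I would first observe that a local minimizer which is also an isolated critical point is in fact a \emph{strict} local minimizer: if $\Phi\ge\Phi(u_0)$ on a neighbourhood of $u_0$ and $\Phi(v)=\Phi(u_0)$ for some $v\ne u_0$ arbitrarily close to $u_0$, then $v$ would itself be a local minimizer, hence a critical point, against isolatedness. So one can fix $\rho>0$ with $\Phi>\Phi(u_0)$ on $\overline{B}_\rho(u_0)\setminus\{u_0\}$ and $K(\Phi)\cap\overline{B}_\rho(u_0)=\{u_0\}$; then $\{\Phi\le\Phi(u_0)\}\cap B_\rho(u_0)=\{u_0\}$, so $C_k(\Phi,u_0)=H_k(\{u_0\},\emptyset)=\delta_{k0}\,\Z$ and the identity above gives $\deg_{(S)_+}(\Phi',B_\rho(u_0),0)=1$. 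For \ref{rab2}, coercivity of $\Phi$ together with $\Phi'$ being of type $(S)_+$ yields (PS): a (PS) sequence is bounded by coercivity, has a weak limit $u$, and $\langle\Phi'(u_n),u_n-u\rangle\to0$ forces $u_n\to u$; consequently $\Phi$ is bounded below with infimum attained (e.g.\ by Ekeland's principle on a minimizing sequence), $K(\Phi)$ is bounded, and $0\notin\Phi'(\partial B_R(0))$ for $R$ large. Taking $a>-\inf_X\Phi$ we have $\{\Phi\le-a\}=\emptyset$, so $C_k(\Phi,\infty)=H_k(X,\emptyset)=H_k(X)=\delta_{k0}\,\Z$ because $X$ is contractible, and again the identity yields $\deg_{(S)_+}(\Phi',B_R(0),0)=1$.

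The main obstacle is the Poincar\'e--Hopf identity itself — the equality between the $(S)_+$-degree of $\Phi'$ and the alternating sum of the ranks of the local, resp.\ asymptotic, critical groups of $\Phi$. This is where the hypothesis that $\Phi'$ is a continuous $(S)_+$-map is genuinely used, via the associated deformation lemma, and it cannot be replaced by an elementary homotopy: the naive choice $h(t,u)=(1-t)\Phi'(u)+t\mathcal{F}(u)$ joining $\Phi'$ to the duality map $\mathcal{F}$ may well vanish on $\partial B_\rho$, since ruling this out would require $\langle\Phi'(u),u\rangle\ge0$ for $\|u\|=\rho$ (which a local minimizer need not satisfy), and in the coercive case the coercivity of the functional $\Phi$ does not imply coercivity of the operator $\Phi'$. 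The two statements are exactly \cite[Corollaries 4.46 and 4.49]{MMP}, to which we refer for complete proofs.
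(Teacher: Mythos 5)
The paper offers no proof of this proposition: it is quoted verbatim from \cite[Corollaries 4.46, 4.49]{MMP}, so your sketch has to be measured against the proofs in the cited source. Your strategy --- deducing both items from a Poincar\'e--Hopf identity $\deg_{(S)_+}(\Phi',\cdot,0)=\sum_k(-1)^k\,\mathrm{rank}\,C_k(\Phi,\cdot)$ --- inverts the logical order of that source: the identity belongs to the Morse-theoretic part of \cite{MMP}, is established much later and under additional hypotheses ((PS), a bounded or discrete critical set in the relevant region), and you explicitly leave it unproved as ``the main obstacle''. Since the local case of that identity is normally derived \emph{using} results of the type of Corollary 4.46, the argument is close to circular, and in any case the hard step is missing rather than reduced to something simpler.

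There are also two concrete gaps. First, in part (ii) you assert that coercivity yields ``$K(\Phi)$ bounded'' and ``$0\notin\Phi'(\partial B_R(0))$ for all large $R$''; both are false in general ($\Phi(x)=x^2+\sin(x^2)$ on $\R$ is coercive, satisfies (PS), and has an unbounded critical set), and both are needed for your identification of the degree on large balls with $\chi(C_*(\Phi,\infty))$. Second, your closing claim that the affine homotopy to the duality map ``cannot'' work is exactly backwards: the proofs in \cite{MMP} (via Theorems 4.45 and 4.48, in the spirit of \cite{R}) rest on the single statement that $\inf_{\overline U}\Phi<\inf_{\partial U}\Phi$ forces $\deg_{(S)_+}(\Phi',U,0)=1$, and this is proved by showing that the homotopy $h(t,u)=(1-t)\Phi'(u)+t\mathcal{F}(u-\bar u)$, based at an almost-minimizer $\bar u\in U$ supplied by Ekeland's variational principle, is admissible on $\partial U$: no pointwise sign condition $\langle\Phi'(u),u\rangle\ge0$ is required, because zeros of $h$ on $\partial U$ are excluded by combining the $(S)_+$-property with the strict gap between the two infima. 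Both items then reduce to verifying that strict gap --- for (i) on small spheres around $u_0$ (your observation that $u_0$ is a strict local minimizer is correct but not sufficient in infinite dimensions, where the infimum of $\Phi$ over a sphere need not be attained; one needs an additional Ekeland/(PS)-type argument on the small spheres), and for (ii) on large spheres, where it follows directly from coercivity. I would either rewrite the proof along these lines or simply keep the bare citation, as the paper does.
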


\section{The Dirichlet problem for the fractional $p$-Laplacian}\label{sec3}

\noindent
Here we recall some basic features of the existing theory about problem \eqref{dir}. First, for all open $\Omega\subseteq\R^N$ and all measurable $u:\Omega\to\R$ we define the Gagliardo seminorm
\[[u]_{s,p,\Omega} = \Big(\iint_{\Omega\times\Omega}\frac{|u(x)-u(y)|^p}{|x-y|^{N+ps}}\,dx\,dy\Big)^\frac{1}{p}.\]
We introduce the fractional Sobolev spaces (see \cite{DPV} for details)
\[W^{s,p}(\Omega) = \big\{u\in L^p(\Omega):\,[u]_{s,p,\Omega}<\infty\big\},\]
\[\widetilde{W}^{s,p}(\Omega) = \Big\{u\in L^p_{\rm loc}(\R^N):\,u\in W^{s,p}(\Omega') \ \text{for some $\Omega'\Supset\Omega$ and} \ \int_{\R^N}\frac{|u(x)|^{p-1}}{(1+|x|)^{N+ps}}\,dx<\infty\Big\},\]
\[\w = \big\{u\in W^{s,p}(\R^N):\,u=0 \ \text{in $\Omega^c$}\big\}.\]
Clearly $\w\subset\widetilde{W}^{s,p}(\Omega)$, and conversely for all $u\in\widetilde{W}^{s,p}(\Omega)$ s.t.\ $u=0$ in $\Omega^c$ we have $u\in\w$, see \cite[Lemma 2.1]{FI}. If $\Omega\subset\R^N$ is a bounded domain with $C^{1,1}$-smooth boundary $\partial\Omega$, then $\w$ is a uniformly convex, separable Banach space with norm
\[\|u\| = [u]_{s,p,\R^N},\]
whose dual space is denoted $W^{-s,p'}(\Omega)$. Also, the embedding $\w\hookrightarrow L^r(\Omega)$ is continuous for all $r\in[1,p^*_s]$ and compact for all $r\in[1,p^*_s)$, where the fractional critical exponent is defined by
\[p^*_s = \frac{Np}{N-ps}.\]
By \cite[Lemma 2.3]{IMS}, we can extend the definition of the fractional $p$-Laplacian to a wider class of functions. For all $u\in\widetilde{W}^{s,p}(\Omega)$, $\varphi\in\w$ set
\[\langle\fpl u,\varphi\rangle = \iint_{\R^N\times\R^N}\frac{|u(x)-u(y)|^{p-2}(u(x)-u(y))(\varphi(x)-\varphi(y))}{|x-y|^{N+ps}}\,dx\,dy.\]
Then we have $\fpl u\in W^{-s,p'}(\Omega)$, and the definition above agrees with the one given in Section \ref{sec1} for $u$ smooth enough (for instance, $u\in C^\infty_c(\Omega)$). The restricted operator $\fpl:\w\to W^{-s,p'}(\Omega)$ is a continuous $(S)_+$-map (in fact, the duality map of $\w$ is $u\mapsto\fpl u/\|u\|^{p-2}$), as well as the gradient of the $C^1$-functional
\[u\mapsto\frac{\|u\|^p}{p}.\]
We recall a useful formula, holding for any $u\in\w$:
\[\|u^\pm\|^p \le \langle\fpl u,\pm u^\pm\rangle.\]
Also, such operator is strictly $(T)$-monotone:

\begin{proposition}\label{stm}
{\rm \cite[proof of Lemma 9]{LL}} Let $u, v \in \widetilde{W}^{s,p}(\Omega)$ s.t.\ $(u-v)^+ \in \w$ satisfy 
\[\langle \fpl u - \fpl v, (u-v)^+\rangle \le 0,\]
then $u \le v$ in $\Omega$.
\end{proposition}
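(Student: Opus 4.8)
The plan is to reproduce the classical strict comparison argument, which hinges on two elementary facts: the function $\phi(t)=|t|^{p-2}t$ is a strictly increasing bijection of $\R$, and $t\mapsto t^+$ is non-decreasing; everything else is bookkeeping with the bilinear form defining $\fpl$.

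First I would set $g=u-v$ and regard $w:=g^+=(u-v)^+$ as a test function, which is legitimate precisely because $w\in\w$; recall that every element of $\w$ vanishes in $\Omega^c$, so $g\le0$ a.e.\ in $\Omega^c$. By bilinearity of the duality pairing and \cite[Lemma 2.3]{IMS} (which guarantees that all the integrals below are finite), the hypothesis rewrites as
\[0\ge\langle\fpl u-\fpl v,w\rangle=\iint_{\R^N\times\R^N}\frac{\big(\phi(u(x)-u(y))-\phi(v(x)-v(y))\big)\big(w(x)-w(y)\big)}{|x-y|^{N+ps}}\,dx\,dy.\]

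Next I would observe that the integrand is non-negative a.e. Writing $a=u(x)-u(y)$ and $b=v(x)-v(y)$, one has $a-b=g(x)-g(y)$, so by monotonicity of $\phi$ the factor $\phi(a)-\phi(b)$ has the same sign as $g(x)-g(y)$; since $w=g^+$ and $t\mapsto t^+$ is non-decreasing, $w(x)-w(y)$ has a compatible sign (and it vanishes whenever $g(x)=g(y)$, in which case $\phi(a)=\phi(b)$ too). Hence the integrand is $\ge0$, and together with the displayed inequality it must vanish for a.e.\ $(x,y)\in\R^N\times\R^N$. To finish, put $P=\{x\in\R^N:g(x)>0\}$, so that $w>0$ exactly on $P$ and, by the previous step, $P\subseteq\Omega$ up to a null set. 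If $|P|>0$, then since $\Omega$ is bounded we have $|\Omega^c|=\infty$, so $P\times\Omega^c$ has positive measure; for a.e.\ $(x,y)$ in that set one simultaneously has $w(x)-w(y)>0$ and vanishing integrand, which forces $\phi(a)=\phi(b)$, hence $a=b$ by injectivity of $\phi$, i.e.\ $g(x)=g(y)$ — impossible, since $g(x)>0\ge g(y)$. Therefore $|P|=0$, that is $(u-v)^+=0$ a.e., which is exactly $u\le v$ in $\Omega$.

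The computations are routine; the only point that genuinely needs care is the interaction with $\Omega^c$, where $u$ and $v$ individually need not vanish: one must use that it is the difference $(u-v)^+$, and not $u$ or $v$, that belongs to $\w$, and that $|\Omega^c|>0$ so that the pairs $(x,y)$ with $g(x)>0\ge g(y)$ form a set of positive measure (equivalently, one could work on $P\times(\R^N\setminus P)$). The other thing to check carefully, though elementary, is the sign analysis yielding positivity of the integrand.
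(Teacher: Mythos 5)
Your argument is correct and is essentially the standard $(T)$-monotonicity proof that the paper delegates to \cite[proof of Lemma 9]{LL}: pointwise nonnegativity of the integrand via strict monotonicity of $t\mapsto|t|^{p-2}t$ and monotonicity of $t\mapsto t^+$, followed by the observation that vanishing of the integrand on $\{u>v\}\times\Omega^c$ forces $|\{u>v\}|=0$. The sign analysis and the treatment of $\Omega^c$ (using only that $(u-v)^+\in\w$ and $|\Omega^c|>0$) are exactly the delicate points, and you handle both correctly.
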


\noindent
Now let us consider problem \eqref{dir}, under the following basic hypothesis:
\begin{itemize}[leftmargin=1cm]
\item[${\bf H}_0$] $f:\Omega\times\R\to\R$ is a Carath\'{e}odory function and there exist $c_0>0$, $r\in(1,p^*_s)$ s.t.\ for a.e.\ $x\in\Omega$ and all $t\in\R$
\[|f(x,t)| \le c_0 (1+|t|^{r-1}).\]
\end{itemize}
We say that $u\in\widetilde{W}^{s,p}(\Omega)$ is a (weak) supersolution of \eqref{dir} if for all $\varphi\in\w_+$
\[\langle\fpl u,\varphi\rangle \ge \int_\Omega f(x,u)\varphi\,dx,\]
and similarly we define a (weak) subsolution. A (weak) solution of \eqref{dir} is both a super- and a subsolution, i.e., a function $u\in\w$ s.t.\ for all $\varphi\in\w$
\[\langle\fpl u,\varphi\rangle = \int_\Omega f(x,u)\varphi\,dx,\]
The definitions of super-, subsolutions and solutions for other Dirichlet problems will be analogous. We have the following a priori bound:

\begin{proposition}\label{apb}
{\rm\cite[Theorem 3.3]{CMS}} Let ${\bf H}_0$ hold, $u\in\w$ be a solution of \eqref{dir}. Then, $u\in L^{\infty}(\Omega)$ with $\|u\|_{\infty} \le C$, for some $C=C(\|u\|)>0$.
\end{proposition}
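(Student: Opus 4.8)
\noindent
The plan is to run the classical De Giorgi--Stampacchia truncation scheme, adapted to the nonlocal operator. It suffices to bound $u^+$ from above, the bound on $u^-$ following by applying the same argument to $-u$, which solves \eqref{dir} with $f(x,t)$ replaced by $-f(x,-t)$, still of type ${\bf H}_0$. So let $u\in\w$ solve \eqref{dir} and, for $k\ge1$, set $w_k=(u-k)^+$ and $A_k=\{x\in\Omega:\,u(x)>k\}$; note $w_k\in\w$ since $u=0$ in $\Omega^c$ and $k>0$, so $w_k=0$ in $\Omega^c$ as well. Testing the weak formulation with $\varphi=w_k$ and using, pointwise inside the double integral defining $\langle\fpl u,w_k\rangle$, the monotonicity inequality
\[|a-b|^{p-2}(a-b)\big((a-k)^+-(b-k)^+\big)\ge\big|(a-k)^+-(b-k)^+\big|^p\qquad(a,b\in\R)\]
(the analogue, for the truncation $t\mapsto(t-k)^+$, of the formula $\|u^\pm\|^p\le\langle\fpl u,\pm u^\pm\rangle$ recalled above), I would obtain
\[\|w_k\|^p=[w_k]_{s,p,\R^N}^p\le\langle\fpl u,w_k\rangle=\int_{A_k}f(x,u)\,w_k\,dx.\]
A pleasant feature of this step is that it handles the nonlocal ``tail'' for free: the inequality above is global and $w_k$ vanishes off $A_k$, so the long-range interactions between $A_k$ and $A_k^c$ (including $\Omega^c$) carry the right sign and need no separate treatment.

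Next, by ${\bf H}_0$ and $u>k\ge1$ on $A_k$, one has $\int_{A_k}f(x,u)w_k\,dx\le2c_0\int_{A_k}u^{r-1}w_k\,dx$; H\"older's inequality (with exponents $p^*_s/(r-1)$, $p^*_s$, $p^*_s/(p^*_s-r)$), the embedding $\w\hookrightarrow L^{p^*_s}(\Omega)$ and the a priori control $\|u\|_{p^*_s}\le C\|u\|$ then yield
\[\|w_k\|^{p-1}\le C\,|A_k|^{1-r/p^*_s},\qquad C=C(\|u\|).\]
Combining this with Chebyshev's inequality, $(h-k)^{p^*_s}|A_h|\le\|w_k\|_{L^{p^*_s}(A_h)}^{p^*_s}\le C\|w_k\|^{p^*_s}$ for $h>k\ge1$, gives the recursion
\[\psi(h)\le\frac{C}{(h-k)^{p^*_s}}\,\psi(k)^{\beta},\qquad\psi(k):=|A_k|,\quad\beta:=\frac{p^*_s-r}{p-1}.\]
If $\beta>1$ (e.g.\ when $r<p^*_s-p+1$), Stampacchia's lemma applied to the nonincreasing function $\psi$ (if a nonnegative nonincreasing $\psi$ satisfies such a recursion with $\beta>1$ then $\psi$ vanishes beyond an explicit level depending on $\psi(1)$ and $C$) gives $\psi(k)=0$ for $k$ large, hence $\|u^+\|_\infty\le C(\|u\|)$, and one concludes.

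The main obstacle is that, for $r$ in the full subcritical range $(1,p^*_s)$, one may well have $\beta\le1$ --- this already occurs for $p=2$ and $r$ close to $p^*_s$ --- so the iteration does not close directly. I would resolve this by a preliminary Moser-type bootstrap: testing \eqref{dir} with (suitably truncated) powers of $u^+$ and using the corresponding algebraic inequalities for $\fpl$ together with the Sobolev inequality, one shows $u\in L^q(\Omega)$ for every $q<\infty$, with $\|u\|_q\le C(\|u\|,q)$; in particular, by ${\bf H}_0$, $f(\cdot,u)\in L^{q_0}(\Omega)$ for some $q_0>N/(ps)$. Redoing the De Giorgi step with $\|f(\cdot,u)\|_{q_0}$ in place of the factor $\|u\|_{p^*_s}^{r-1}$ in H\"older's inequality produces the same recursion with a new exponent $\beta(q_0)=\frac{p^*_s-p^*_s/q_0-1}{p-1}$, which exceeds $1$ once $q_0$ is large enough (note $\beta(q_0)\to\frac{p^*_s-1}{p-1}>1$ since $p^*_s>p$); Stampacchia's lemma then gives $\|u^+\|_\infty\le C(\|u\|)$. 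Since every constant appearing along the bootstrap depends only on $N,s,p,c_0,r,|\Omega|$ and on $\|u\|$ (through Sobolev constants and the $L^{p^*_s}$ bound), the final estimate has the claimed form $\|u\|_\infty\le C=C(\|u\|)$.
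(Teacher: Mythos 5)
The paper does not prove this proposition: it is quoted verbatim from \cite[Theorem 3.3]{CMS}, so there is no internal argument to compare against. Your proposal is a self-contained proof along the standard De Giorgi--Stampacchia lines, and it is correct in outline; indeed it is essentially the classical route by which such $L^\infty$ bounds are obtained for the fractional $p$-Laplacian. The individual steps check out: the pointwise monotonicity inequality for the truncation $t\mapsto(t-k)^+$ follows from the $1$-Lipschitz character of the truncation exactly as you say, and it does absorb the nonlocal tail; the three-exponent H\"older step is consistent ($\tfrac{r-1}{p^*_s}+\tfrac{1}{p^*_s}+\tfrac{p^*_s-r}{p^*_s}=1$); and you correctly identify the genuine obstruction, namely that the Stampacchia exponent $\beta=\tfrac{p^*_s-r}{p-1}$ need not exceed $1$ for $r$ near $p^*_s$, which a careless write-up would miss. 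The fix via a preliminary Moser bootstrap is the right one, and the limit $\beta(q_0)\to\tfrac{p^*_s-1}{p-1}>1$ is computed correctly. The only place where the argument is genuinely sketchy is that bootstrap itself: you should at least quote the algebraic inequality of the form $|a-b|^{p-2}(a-b)\bigl(g(a)-g(b)\bigr)\ge c\,|G(a)-G(b)|^p$ with $G'=(g')^{1/p}$ for truncated powers $g$ (standard in the fractional $p$-Laplacian literature), and observe that since $r<p^*_s$ strictly, each iteration step raises the integrability exponent by a fixed multiplicative factor, so any finite $q_0$ is reached in finitely many steps with constants depending only on $N,s,p,c_0,r,|\Omega|,\|u\|$. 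With those references supplied, the proof is complete and yields the stated dependence $C=C(\|u\|)$.
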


\noindent
In fractional regularity theory, the following weighted H\"older spaces play a major role. Set for all $x\in\R^N$
\[{\rm d}_\Omega(x) = {\rm dist}(x,\Omega^c),\]
and for all $\alpha\in(0,1)$
\[C^\alpha_s(\overline\Omega) = \Big\{u \in C^0(\overline{\Omega}): \frac{u}{\ds} \ \text{has a $\alpha$-H\"older continuous extension to} \ \overline{\Omega}\Big\},\]
which is a Banach space endowed with the norm
\[\|u\|_{\alpha,s}= \Big\|\frac{u}{\ds}\Big\|_\infty + \sup_{x \neq y} \frac{|u(x)/\ds(x) - u(y)/\ds(y)|}{|x-y|^{\alpha}}.\]
For $\alpha=0$, the space $\cs$ is defined similarly, with $\alpha$-H\"older continuous replaced by continuous and the corresponding norm
\[\|u\|_{0,s} = \Big\|\frac{u}{\ds}\Big\|_\infty.\]
In addition, the interior of the positive order cone of $\cs$ is
\[{\rm int}(\cs_+) = \Big\{u\in\cs:\,\inf_\Omega\frac{u}{\ds}>0\Big\}.\]
By Proposition \ref{apb} and \cite[Theorem 1.1]{IMS1} we have the following global regularity result:

\begin{proposition}\label{reg}
Let ${\bf H}_0$ hold, $u\in\w$ be a solution of \eqref{dir}. Then, $u\in C^\alpha_s(\overline\Omega)$ for some $\alpha\in(0,s]$ (independent of $u$).
\end{proposition}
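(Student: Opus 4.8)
The statement is a \emph{bootstrap} consequence of the two preceding propositions, so the plan is short. First, by Proposition \ref{apb} the solution $u$ belongs to $L^\infty(\Omega)$, with $\|u\|_\infty\le C$ for a constant $C$ depending only on $\|u\|$.

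Next, I would read \eqref{dir} as a Dirichlet problem with a frozen right-hand side: set $h(x)=f(x,u(x))$, which is measurable since $f$ is Carath\'eodory and $u$ is measurable, and which by ${\bf H}_0$ together with the previous step satisfies
\[|h(x)| = |f(x,u(x))| \le c_0\big(1+|u(x)|^{r-1}\big) \le c_0\big(1+\|u\|_\infty^{r-1}\big)\qquad\text{for a.e.\ }x\in\Omega,\]
hence $h\in L^\infty(\Omega)$ with $\|h\|_\infty$ controlled by $\|u\|$. Thus $u\in\w$ solves $\fpl u=h$ in $\Omega$, $u=0$ in $\Omega^c$, with bounded datum and $C^{1,1}$ domain.

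Finally, I would invoke the global weighted H\"older regularity theorem \cite[Theorem 1.1]{IMS1}, which applied to this problem yields $u\in C^\alpha_s(\overline\Omega)$ for some $\alpha\in(0,s]$, together with an estimate of $\|u\|_{\alpha,s}$ in terms of $\|h\|_\infty$ (and hence of $\|u\|$). The only delicate point — and the reason the statement insists that $\alpha$ be independent of $u$ — is that in \cite[Theorem 1.1]{IMS1} the H\"older exponent $\alpha$ is produced depending only on the structural parameters $N,p,s$ and on $\Omega$, and not on the size of the right-hand side; only the norm $\|u\|_{\alpha,s}$ deteriorates as $\|u\|$ grows. There is no genuine obstacle here, since both deep ingredients (the Moser-type iteration behind Proposition \ref{apb} and the fine boundary analysis behind \cite[Theorem 1.1]{IMS1}) are quoted as black boxes; the whole content of the proof is the elementary observation that ${\bf H}_0$ turns an $L^\infty$ bound on $u$ into an $L^\infty$ bound on the source $f(\cdot,u)$.
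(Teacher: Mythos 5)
Your argument is exactly the one the paper intends: Proposition \ref{apb} gives $u\in L^\infty(\Omega)$, hypothesis ${\bf H}_0$ then turns this into an $L^\infty$ bound on the frozen datum $f(\cdot,u)$, and \cite[Theorem 1.1]{IMS1} yields $u\in C^\alpha_s(\overline\Omega)$ with $\alpha$ depending only on $N,p,s,\Omega$. The paper gives no separate proof beyond citing these two ingredients, so your write-up is a correct (and slightly more explicit) version of the same reasoning.
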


\noindent
While Proposition \ref{stm} above can be regarded as a weak comparison principle, strong maximum and comparison principles can be stated as follows:

\begin{proposition}\label{scp}
{\rm \cite[Theorems 2.6, 2.7]{IMP}} Let $g\in C^0(\R)\cap BV_{\rm loc}(\R)$:
\begin{enumroman}
\item\label{scp1} if $u\in\widetilde{W}^{s,p}(\Omega)\cap C^0(\overline\Omega)$ satisfies
\[\begin{cases}
\fpl u+g(u) \ge g(0) & \text{in $\Omega$} \\
u \ge 0 & \text{in $\R^N$}
\end{cases}\]
and $u\neq 0$, then
\[\inf_\Omega\frac{u}{\ds}>0;\]
\item\label{scp2} if $u\in\widetilde{W}^{s,p}(\Omega)\cap C^0(\overline\Omega)$, $v\in\w\cap C^0(\overline\Omega)$, $C>0$ satisfy
\[\begin{cases}
\fpl v+g(v) \le \fpl u+g(u) \le C & \text{in $\Omega$} \\
0 < v \le u & \text{in $\Omega$} \\
0 = v \le u & \text{in $\Omega^c$}
\end{cases}\]
and $u\neq v$, then
\[\inf_\Omega\frac{u-v}{\ds} > 0.\]
\end{enumroman}
\end{proposition}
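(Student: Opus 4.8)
The plan is to treat both items as Hopf-type boundary estimates, obtained by combining an \emph{interior} strong maximum (resp.\ comparison) principle with a \emph{barrier argument} at $\partial\Omega$, after first absorbing the reaction term through the monotone structure encoded in the $BV_{\rm loc}$ hypothesis. Since $g\in C^0(\R)\cap BV_{\rm loc}(\R)$ has total variation function continuous on compact intervals, write $g=\gamma_1-\gamma_2$ with $\gamma_1,\gamma_2\in C^0(\R)$ nondecreasing. In case \ref{scp1}, boundedness of $u$ on $\overline\Omega$ and continuity of $g$ give $|g(u)-g(0)|\le\omega$, hence $\fpl u\ge-\omega$ weakly in $\Omega$, with $u\ge0$ in $\R^N$ and $u\not\equiv0$. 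In case \ref{scp2}, subtracting the two inequalities and using $v\le u$ together with the monotonicity of $\gamma_1,\gamma_2$ yields
\[\fpl u-\fpl v\ge\gamma_1(v)-\gamma_1(u)\ge-L,\qquad L=\operatorname{osc}_{[0,\|u\|_\infty]}\gamma_1,\]
weakly in $\Omega$. So in both cases one is reduced to a nonnegative, not identically zero function that weakly solves a fractional $p$-Laplacian (or a difference of two such) inequality with bounded right-hand side.

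The first step is interior positivity: $u>0$ (resp.\ $u-v>0$) in $\Omega$. For \ref{scp1} this is the nonlocal strong maximum principle: if $u$ vanished at an interior point $x_0$ then, morally, $\fpl u(x_0)=-2\int_{\R^N}u^{p-1}(y)|x_0-y|^{-N-ps}\,dy$, which combined with $\fpl u\ge-\omega$ and the weak Harnack inequality for $\fpl$ forces $u\equiv0$; this is made rigorous at the level of weak supersolutions. For \ref{scp2} one first \emph{linearizes}, writing
\[\langle\fpl u-\fpl v,\varphi\rangle=\iint_{\R^N\times\R^N}\mathcal J(x,y)\,\frac{(w(x)-w(y))(\varphi(x)-\varphi(y))}{|x-y|^{N+ps}}\,dx\,dy,\qquad w=u-v,\]
with $\mathcal J(x,y)=(p-1)\int_0^1|(v+tw)(x)-(v+tw)(y)|^{p-2}\,dt\ge0$; on any set where $u,v$ are bounded away from $0$ the kernel $\mathcal J(x,y)|x-y|^{-N-ps}$ is locally comparable to $|x-y|^{-N-ps}$, so the weak Harnack estimate for this weighted linear nonlocal operator gives $w>0$ in $\Omega$ (using the interior positivity of $u$ just obtained).

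The second step is the Hopf estimate at the boundary: $u\ge c\,\ds$ (resp.\ $u-v\ge c\,\ds$) on a collar $\{\mathrm d_\Omega<\delta\}$. Fixing $x_0\in\partial\Omega$, the $C^{1,1}$ regularity of $\partial\Omega$ furnishes an interior tangent ball $B=B_R(y_0)\subset\Omega$ with $x_0\in\partial B$. On $B$ one constructs an explicit radial barrier $\underline u\in\w\cap C^0(\overline\Omega)$, supported in $\overline B$ and comparable near $\partial B$ to $(R^2-|x-y_0|^2)_+^s$, and checks --- by splitting the singular integral defining $\fpl\underline u$ into a piece near $\partial B$ and a nonlocal tail, exploiting $p\ge2$ and the $s$-homogeneity of $\fpl$ --- that $\fpl\underline u\le-\omega$ (resp.\ lies below the right-hand side of the linearized inequality) in a thin collar of $\partial B$, while $\underline u\le u$ (resp.\ $\le u-v$) outside that collar by the first step. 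Then the strict $(T)$-monotonicity of $\fpl$ (Proposition \ref{stm}) --- applied to the weighted operator in case \ref{scp2} --- forces $\underline u\le u$ (resp.\ $\le u-v$) in the collar, and since $\underline u(x)\ge c'\,\mathrm d_B(x)^s\ge c'\,\ds(x)$ there, this is the sought lower bound near $x_0$. A finite covering of $\partial\Omega$ by such balls, combined with the interior bound (where $\ds$ is bounded above while $u$, resp.\ $u-v$, is bounded below), yields $\inf_\Omega u/\ds>0$, resp.\ $\inf_\Omega(u-v)/\ds>0$.

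I expect the barrier construction in the second step to be the main obstacle: for $p>2$ the operator is degenerate and nonlocal, so $\underline u$ must match the exact $\ds$-power, and verifying that it is a (weighted, in case \ref{scp2}) subsolution demands a careful quantitative split of the singular integral near $\partial\Omega$ together with control of the far-field tail --- this is precisely the technical core of \cite[Theorems 2.6, 2.7]{IMP}.
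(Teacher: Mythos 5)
First, a point of comparison: the paper does not prove this proposition at all --- it is quoted verbatim from \cite[Theorems 2.6, 2.7]{IMP} --- so your sketch can only be measured against the arguments in that reference. Your overall architecture (interior positivity followed by a boundary barrier matching the $\ds$ rate, glued by a covering argument) is indeed the standard one. But your treatment of the zero-order term contains a genuine gap. In part (i) you replace $\fpl u+g(u)\ge g(0)$ by $\fpl u\ge-\omega$ with a \emph{constant} $\omega>0$ and then invoke a strong minimum principle. That implication is false: take $u\in C^\infty_c(\R^N)$, $u\ge 0$, vanishing on a ball $B\Subset\Omega$ and also near a portion of $\partial\Omega$, but $u\not\equiv 0$; then $\fpl u$ is a bounded continuous function, so $\fpl u\ge-\omega$ holds in $\Omega$ for some $\omega>0$, yet $u$ vanishes on interior sets and $\inf_\Omega u/\ds=0$. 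The weak Harnack inequality only bounds $\inf_{B_r}u$ from below up to an additive error of order $(\omega r^{ps})^{1/(p-1)}$, so a constant negative right-hand side cannot force positivity. The whole point of the hypothesis $\fpl u+g(u)\ge g(0)$ and of the Jordan decomposition $g=\gamma_1-\gamma_2$ is that the nondecreasing part is absorbed into the operator $v\mapsto\fpl v+\gamma_1(v)$, which is still $(T)$-monotone (cf.\ Proposition \ref{stm}), at the \emph{comparison} stage with the barrier; one never passes through a constant lower bound on $\fpl u$ alone. The same objection applies to your reduction of part (ii) to $\fpl u-\fpl v\ge-L$.

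The second gap is in your linearization for part (ii). The weight $\mathcal J(x,y)=(p-1)\int_0^1|(v+tw)(x)-(v+tw)(y)|^{p-2}\,dt$ degenerates, for $p>2$, wherever the \emph{increments} $(v+tw)(x)-(v+tw)(y)$ are small --- not where the \emph{values} of $u,v$ are small --- so knowing that $u,v$ are bounded away from $0$ gives no lower bound on $\mathcal J$, and the weighted kernel is not locally comparable to $|x-y|^{-N-ps}$ (consider $u,v$ nearly constant on a ball). This degeneracy is exactly why the strong comparison principle is delicate for $p>2$, and in the local setting it is in general unavailable without extra nondegeneracy assumptions; the nonlocal proof works because the kernel also sees pairs $(x,y)$ with $y\in\Omega^c$, where $u(y)=v(y)=0$ while $u(x),v(x)>0$, so the relevant increments are uniformly nondegenerate there --- a mechanism quite different from the local uniform ellipticity you assert. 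Your remaining steps (the $(R^2-|x-y_0|^2)_+^s$-type barrier, which is exact only for $p=2$ and must be verified as a one-sided barrier for $p>2$, and the covering argument) are consistent with the cited proofs, but as written the two reductions above would make both parts fail.
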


\noindent
Referring to \cite{I} for details, we consider the following nonlinear weighted eigenvalue problem with weight function $m\in L^\infty(\Omega)$:
\beq\label{evp}
\begin{cases}
\fpl u = \lambda m(x)|u|^{p-2}u & \text{in $\Omega$} \\
u = 0 & \text{in $\Omega^c$.}
\end{cases}
\eeq
The following result summarizes the main properties of the principal eigenvalue of \eqref{evp}:

\begin{proposition}\label{pev}
{\rm\cite[Propositions 3.3, 4.2]{I}} Let $m\in L^\infty(\Omega)$ be s.t.\ $m^+\neq 0$. Then, the smallest eigenvalue of \eqref{evp} is
\[\lambda_1(m) = \inf_{u\in\w\setminus\{0\}}\frac{\|u\|^p}{\int_\Omega m(x)|u|^p\,dx} > 0.\]
In addition:
\begin{enumroman}
\item\label{pev1} $\lambda_1(m)$ is attained at a unique positive, normalized eigenfunction $e_1(m)\in {\rm int}(\cs_+)$, while any nonprincipal eigenfunction changes sign in $\Omega$;
\item\label{pev2} for all $\tilde m\in L^\infty(\Omega)$ s.t.\ $m\le\tilde m$ in $\Omega$ and $m\neq\tilde m$, we have
\[\lambda_1(m) > \lambda_1(\tilde m).\]
\end{enumroman}
\end{proposition}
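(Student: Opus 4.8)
The plan is to run the classical direct-method plus Lagrange-multiplier scheme for the first eigenvalue of a $p$-homogeneous operator (the statement is in fact \cite[Propositions 3.3, 4.2]{I}) and then to read off the qualitative properties from the regularity and strong maximum/comparison principles recalled above together with the nonlocal Picone inequality of \cite{LL}. First, $\lambda_1(m)$ is well posed and strictly positive: since $m^+\neq0$, taking $u\in\w\setminus\{0\}$ supported in $\{m>0\}$ makes the Rayleigh quotient finite, while $\int_\Omega m|u|^p\le\|m\|_\infty\|u\|_p^p\le C\|u\|^p$ by the embedding $\w\hookrightarrow L^p(\Omega)$ gives $\lambda_1(m)\ge(C\|m\|_\infty)^{-1}>0$. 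For attainment I would take a minimizing sequence with $\int_\Omega m|u_n|^p=1$: it is bounded in $\w$, so up to a subsequence $u_n\rightharpoonup e_1$ in $\w$ and $u_n\to e_1$ in $L^p(\Omega)$ by compactness, whence lower semicontinuity of the norm and $L^p$-convergence give $\|e_1\|^p\le\lambda_1(m)$ and $\int_\Omega m|e_1|^p=1$; thus $e_1\neq0$ is a minimizer, and since $\||e_1|\|\le\|e_1\|$ I may assume $e_1\ge0$. The Lagrange multiplier rule applied to $u\mapsto\|u\|^p/p$ and $u\mapsto\int_\Omega m|u|^p/p$ (whose gradients are $\fpl u$ and $m|u|^{p-2}u$) then gives $\fpl e_1=\lambda_1(m)\,m\,|e_1|^{p-2}e_1$, the multiplier being $\|e_1\|^p=\lambda_1(m)$ by testing with $e_1$; and testing an arbitrary eigenpair $(\lambda,u)$ with $u$ yields $\|u\|^p=\lambda\int_\Omega m|u|^p$, so every eigenvalue is $\ge\lambda_1(m)$.

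Next I would upgrade $e_1$: since $f(x,t)=\lambda_1(m)m(x)|t|^{p-2}t$ satisfies ${\bf H}_0$ with $r=p$, Proposition \ref{reg} gives $e_1\in C^0(\overline\Omega)$; taking $g(t)=\lambda_1(m)\|m\|_\infty|t|^{p-2}t\in C^0(\R)\cap BV_{\rm loc}(\R)$ one checks $\fpl e_1+g(e_1)\ge0=g(0)$ in $\Omega$ (because $(m+\|m\|_\infty)e_1^{p-1}\ge0$) together with $e_1\ge0$ in $\R^N$ and $e_1\neq0$, so Proposition \ref{scp}\ref{scp1} yields $\inf_\Omega e_1/\ds>0$, i.e.\ $e_1\in{\rm int}(\cs_+)$; in particular $e_1>0$ in $\Omega$.

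The main obstacle, as usual for $p$-Laplacian type eigenvalue problems, will be the simplicity of $\lambda_1(m)$ and the sign of non-principal eigenfunctions: here the elementary argument does not suffice and I would invoke the nonlocal Picone inequality of \cite{LL}, $\langle\fpl w,v^p/w^{p-1}\rangle\le\|v\|^p$ for $w\in{\rm int}(\cs_+)$ and $0\le v\in{\rm int}(\cs_+)$ (so that $v^p/w^{p-1}\in\w$), with equality only if $v$ and $w$ are proportional. If $e_1,\tilde e_1\ge0$ are two normalized principal eigenfunctions, both belong to ${\rm int}(\cs_+)$ by the previous step, so Picone applied to $(e_1,\tilde e_1)$ forces equality (each side equals $\lambda_1(m)$), whence $\tilde e_1=e_1$. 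If instead $v\ge0$ were an eigenfunction for some $\mu\neq\lambda_1(m)$, then $v\in{\rm int}(\cs_+)$, and Picone applied to $(e_1,v)$ and to $(v,e_1)$ would give both $\lambda_1(m)\le\mu$ and $\mu\le\lambda_1(m)$ (after dividing by the strictly positive integrals $\int_\Omega m v^p$, $\int_\Omega m e_1^p$), a contradiction; replacing $v$ by $-v$ excludes $v\le0$ as well, so every non-principal eigenfunction changes sign.

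Finally, for \ref{pev2} I would test the Rayleigh quotient of $\tilde m$ with $e_1(m)$: using $\tilde m\ge m$ (so also $\tilde m^+\neq0$ and $\lambda_1(\tilde m)$ is defined),
\[\lambda_1(\tilde m)\le\frac{\|e_1(m)\|^p}{\int_\Omega\tilde m\,|e_1(m)|^p}<\frac{\|e_1(m)\|^p}{\int_\Omega m\,|e_1(m)|^p}=\lambda_1(m),\]
where the strict inequality holds because $\int_\Omega(\tilde m-m)|e_1(m)|^p>0$, since $\tilde m-m\ge0$ is not a.e.\ zero while $e_1(m)>0$ in $\Omega$, and the last equality holds because $e_1(m)$ attains the infimum defining $\lambda_1(m)$. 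All the nonlocal difficulty is concentrated in the simplicity step, which is why the Picone/hidden-convexity machinery of \cite{LL} is essential; the remainder is a routine transcription of the local theory, made available by Propositions \ref{reg} and \ref{scp}.
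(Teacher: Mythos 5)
This proposition is imported verbatim from \cite[Propositions 3.3, 4.2]{I}; the paper contains no proof of it, so there is nothing internal to compare against. Your reconstruction is the standard (and surely the intended) argument: direct method plus Lagrange multipliers for attainment, Propositions \ref{reg} and \ref{scp} for regularity and the ${\rm int}(\cs_+)$ membership, a fractional Picone/hidden-convexity inequality for simplicity and the nodal character of non-principal eigenfunctions, and testing the Rayleigh quotient with $e_1(m)$ for the strict monotonicity in \ref{pev2}. It is essentially correct, but two steps need more care than you give them. First, the admissibility of the Picone test function $v^p/w^{p-1}$ in $\w$ is not automatic even when $v,w\in{\rm int}(\cs_+)$ (boundedness of $v/w$ does not by itself give a finite Gagliardo seminorm); the standard remedy is to apply the inequality to $w+\eps$ and let $\eps\to0^+$. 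Second, when $m$ changes sign the identity $\|u\|^p=\lambda\int_\Omega m|u|^p\,dx$ does not show that every eigenvalue is $\ge\lambda_1(m)$: eigenpairs with $\int_\Omega m|u|^p\,dx<0$ have $\lambda<0$, so \emph{smallest eigenvalue} must be understood as smallest positive eigenvalue (the convention of \cite{I}), and your two-sided Picone comparison should accordingly be restricted to eigenvalues $\mu$ with $\int_\Omega m|v|^p\,dx>0$, which is exactly the case for a signed eigenfunction after the reduction you perform.
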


\noindent
In particular, we will write $\lambda_1=\lambda_1(1)$, $e_1=e_1(1)$. We recall two technical results, related to the eigenvalue $\lambda_1$. The first is of variational nature:

\begin{proposition}\label{var}
{\rm \cite[Lemma 2.7]{IL}} Let $\xi\in L^\infty(\Omega)$ be s.t.\ $\xi\le\lambda_1$ in $\Omega$, and $\xi\neq\lambda_1$. Then, there exists $\sigma>0$ s.t.\ for all $u\in\w$
\[\|u\|^p-\int_\Omega \xi(x)|u|^p\,dx \ge \sigma\|u\|^p.\]
\end{proposition}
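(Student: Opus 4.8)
The plan is to argue by contradiction, exploiting the compactness of the embedding $\w\hookrightarrow L^p(\Omega)$ together with the variational characterization and simplicity of $\lambda_1$ from Proposition \ref{pev}. Suppose the conclusion fails. Then for every $n\in\N$ there is $v_n\in\w\setminus\{0\}$ with
\[\|v_n\|^p-\int_\Omega\xi(x)|v_n|^p\,dx<\frac1n\|v_n\|^p.\]
Since both sides are $p$-homogeneous, setting $u_n=v_n/\|v_n\|$ I obtain $\|u_n\|=1$ and $\int_\Omega\xi(x)|u_n|^p\,dx>1-\frac1n$ for all $n$.

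Next I would pass to the limit. As $(u_n)$ is bounded in the reflexive space $\w$, up to a subsequence $u_n\rightharpoonup u$ in $\w$; since $p<p^*_s$ the embedding $\w\hookrightarrow L^p(\Omega)$ is compact, so $u_n\to u$ in $L^p(\Omega)$. The functional $w\mapsto\int_\Omega\xi(x)|w|^p\,dx$ is continuous on $L^p(\Omega)$ because $\xi\in L^\infty(\Omega)$, hence $\int_\Omega\xi(x)|u|^p\,dx=\lim_n\int_\Omega\xi(x)|u_n|^p\,dx\ge1$; in particular $u\neq0$. By weak lower semicontinuity of the norm, $\|u\|^p\le\liminf_n\|u_n\|^p=1$, so
\[\|u\|^p-\int_\Omega\xi(x)|u|^p\,dx\le0.\]

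Now I would invoke the sign condition. Since $\xi\le\lambda_1$ in $\Omega$ we have $\int_\Omega\xi(x)|u|^p\,dx\le\lambda_1\int_\Omega|u|^p\,dx$, while Proposition \ref{pev} (applicable as $u\neq0$) gives $\|u\|^p\ge\lambda_1\int_\Omega|u|^p\,dx$; combining with the previous display,
\[0\le\|u\|^p-\lambda_1\int_\Omega|u|^p\,dx\le\|u\|^p-\int_\Omega\xi(x)|u|^p\,dx\le0,\]
so all these quantities vanish. From $\|u\|^p=\lambda_1\int_\Omega|u|^p\,dx$ it follows that $u$ attains the infimum defining $\lambda_1$, hence by Proposition \ref{pev} $u$ is a nonzero scalar multiple of $e_1\in{\rm int}(\cs_+)$, so $|u|>0$ a.e.\ in $\Omega$. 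Subtracting $\|u\|^p=\int_\Omega\xi(x)|u|^p\,dx$ from $\|u\|^p=\lambda_1\int_\Omega|u|^p\,dx$ yields $\int_\Omega(\lambda_1-\xi(x))|u|^p\,dx=0$, and since the integrand is nonnegative with $|u|^p>0$ a.e., this forces $\xi=\lambda_1$ a.e.\ in $\Omega$, contradicting $\xi\neq\lambda_1$. This contradiction produces the desired $\sigma>0$.

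The argument is largely routine; the step I expect to require the most care is the last one, namely identifying the weak limit $u$ as a multiple of the principal eigenfunction and using its strict positivity — guaranteed by $e_1\in{\rm int}(\cs_+)$ and the simplicity of $\lambda_1$ — to upgrade the vanishing of $\int_\Omega(\lambda_1-\xi)|u|^p\,dx$ into $\xi=\lambda_1$ a.e. One must also not overlook the check $u\neq0$, which is forced by the normalization through the inequality $\int_\Omega\xi(x)|u|^p\,dx\ge1$.
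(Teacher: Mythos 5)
Your argument is correct: the normalization, compact embedding $\w\hookrightarrow L^p(\Omega)$, weak lower semicontinuity, and the identification of the limit as a principal eigenfunction (whose strict positivity in $\Omega$ forces $\xi=\lambda_1$ a.e., a contradiction) are exactly the standard route, and each step is justified by the quoted properties in Proposition \ref{pev}. Note that the paper itself gives no proof of this statement — it is cited from \cite[Lemma 2.7]{IL} — and your argument coincides with the usual proof of that lemma.
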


\noindent
The second is a nonlocal version of the antimaximum principle of \cite{GGP}:

\begin{proposition}\label{amp}
{\rm \cite[Lemma 3.9]{FI1}} Let $m,\beta\in L^\infty(\Omega)_+\setminus\{0\}$, $\lambda\ge\lambda_1(m)$, $u\in\w$ solve
\[\begin{cases}
\fpl u = \lambda m(x)|u|^{p-2}u+\beta(x) & \text{in $\Omega$} \\
u = 0 & \text{in $\Omega^c$.}
\end{cases}\]
Then, $u^-\neq 0$.
\end{proposition}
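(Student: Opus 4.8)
The statement is an antimaximum-type principle: if $u$ solves the perturbed eigenvalue equation with $\lambda\ge\lambda_1(m)$ and a nontrivial nonnegative source $\beta$, then $u$ must be somewhere negative. The plan is to argue by contradiction: suppose $u^-=0$, i.e. $u\ge0$ in $\Omega$ (and $u=0$ in $\Omega^c$, so $u\in\w_+$). First I would rule out $u\equiv0$ directly, since $\fpl 0=0$ but $\beta\not\equiv0$, so $0$ is not a solution. Hence $u\ge0$, $u\not\equiv0$. By $\mathbf{H}_0$-type bounds the right-hand side is in $L^\infty(\Omega)$ once we know $u\in L^\infty$, but more simply $\lambda m|u|^{p-2}u+\beta\in L^{p'}$ (indeed $L^\infty$ after Proposition \ref{apb}), so $u$ is a genuine weak solution of a Dirichlet problem with bounded reaction; Proposition \ref{reg} then gives $u\in\cs\subset C^0(\overline\Omega)$. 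Applying Proposition \ref{scp}\ref{scp1} with $g\equiv0$ (which lies in $C^0(\R)\cap BV_{\rm loc}(\R)$), using that $\fpl u=\lambda m|u|^{p-2}u+\beta\ge0=g(0)$ in $\Omega$ and $u\ge0$ in $\R^N$ with $u\not\equiv0$, we conclude $u\in{\rm int}(\cs_+)$, in particular $u>0$ a.e.\ in $\Omega$.

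Now I would test the equation against the principal eigenfunction $e_1(m)\in{\rm int}(\cs_+)$ and, symmetrically, test the eigenfunction equation against $u$. Writing $e:=e_1(m)$, we have $\fpl e=\lambda_1(m)m|e|^{p-2}e=\lambda_1(m)\,m\,e^{p-1}$ in the weak sense, and both $u,e$ lie in $\w$, so both are admissible test functions. Pairing,
\[
\langle\fpl u,e\rangle=\lambda\int_\Omega m\,u^{p-1}e\,dx+\int_\Omega\beta e\,dx,
\qquad
\langle\fpl e,u\rangle=\lambda_1(m)\int_\Omega m\,e^{p-1}u\,dx.
\]
Subtracting,
\[
\langle\fpl u,e\rangle-\langle\fpl e,u\rangle
=(\lambda-\lambda_1(m))\int_\Omega m\,e^{p-1}u\,dx
+\lambda\int_\Omega m\,(u^{p-1}-e^{p-1})\,e\,dx
+\int_\Omega\beta e\,dx,
\]
which is not yet obviously signed because of the middle term. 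The cleaner route is the Picone-type / convexity inequality for the fractional $p$-Laplacian (as used in \cite{LL,I}): for $u>0$, $e>0$ in $\w$ one has the discrete Picone inequality giving
\[
\langle\fpl u,\,e^p/u^{p-1}\rangle\le\langle\fpl e,e\rangle,
\]
provided $e^p/u^{p-1}$ is an admissible test function; since $u\in{\rm int}(\cs_+)$ and $e\in\cs$, the quotient $e/u$ is bounded and $e^p/u^{p-1}\in\w_+$. Plugging in the two equations,
\[
\lambda\int_\Omega m\,u^{p-1}\cdot\frac{e^p}{u^{p-1}}\,dx+\int_\Omega\beta\,\frac{e^p}{u^{p-1}}\,dx
=\Big\langle\fpl u,\frac{e^p}{u^{p-1}}\Big\rangle
\le\langle\fpl e,e\rangle
=\lambda_1(m)\int_\Omega m\,e^p\,dx,
\]
so $(\lambda-\lambda_1(m))\int_\Omega m\,e^p\,dx+\int_\Omega\beta\,e^p u^{1-p}\,dx\le0$. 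Both terms on the left are nonnegative — the first because $\lambda\ge\lambda_1(m)$ and $\int_\Omega m\,e^p=\lambda_1(m)^{-1}\|e\|^p>0$ (recall $e$ is a normalized eigenfunction), the second because $\beta\ge0$, $\beta\not\equiv0$, and $e^p u^{1-p}>0$ a.e.\ in $\Omega$ (here we crucially use $u\in{\rm int}(\cs_+)$, so $u^{1-p}$ is finite a.e.\ and the integrand is strictly positive on the positive-measure set $\{\beta>0\}$). Hence $\int_\Omega\beta\,e^p u^{1-p}\,dx>0$, a contradiction. Therefore $u^-\neq0$.

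The main obstacle is the admissibility and the precise form of the fractional Picone inequality: one must verify that $e^p/u^{p-1}\in\w$ (which is where $u\in{\rm int}(\cs_+)$, hence $e/u\in L^\infty$, and the Hardy-type estimate controlling $[e^p/u^{p-1}]_{s,p}$ come in) and that the pointwise inequality underlying Picone — $|a-b|^{p-2}(a-b)\big(\tfrac{\phi^p}{a^{p-1}}-\tfrac{\psi^p}{b^{p-1}}\big)\le|\alpha-\beta|^{p-2}(\alpha-\beta)(\phi-\psi)$ with the appropriate substitutions — integrates correctly over $\R^N\times\R^N$; this is exactly the computation behind \cite[Lemma 9]{LL} and the strict monotonicity in Proposition \ref{stm}. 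If one prefers to avoid Picone, an alternative is to note that $\fpl u-\lambda_1(m)m\,u^{p-1}\ge(\lambda-\lambda_1(m))m\,u^{p-1}+\beta\ge\beta\gneqq0$ and feed this into Proposition \ref{scp}\ref{scp2} comparing $u$ with a suitable multiple of $e_1(m)$; but the Picone route is the most direct and is the one I would write up.
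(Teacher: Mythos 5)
This proposition is quoted in the paper from \cite[Lemma 3.9]{FI1} and not proved here, so there is no in-text argument to compare against; judged on its own, your plan is correct and follows the standard route for nonlocal antimaximum principles (and, as far as I can tell, essentially the route of the cited lemma). The chain of reductions is sound: the reaction $t\mapsto\lambda m(x)|t|^{p-2}t+\beta(x)$ satisfies ${\bf H}_0$ with $r=p$, so Propositions \ref{apb} and \ref{reg} give $u\in C^\alpha_s(\overline\Omega)$; assuming $u^-=0$ forces $u\neq 0$ (since $\beta\neq 0$) and then Proposition \ref{scp} \ref{scp1} with $g\equiv 0$ gives $u\in{\rm int}(\cs_+)$; and the fractional Picone inequality tested with $e_1(m)^p/u^{p-1}$ yields $(\lambda-\lambda_1(m))\int_\Omega m\,e_1(m)^p\,dx+\int_\Omega\beta\,e_1(m)^p u^{1-p}\,dx\le 0$, whose second term is strictly positive because $\{\beta>0\}$ has positive measure and $e_1(m)>0$ in $\Omega$.

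The one step you should not wave through is the claim that $e_1(m)^p/u^{p-1}\in\w$ ``since $e/u$ is bounded'': boundedness of the quotient controls the $L^\infty$ norm but not the Gagliardo seminorm, and membership in $\w$ of such quotients is not automatic. The clean fix is the usual regularization: test with $w_\eps=e_1(m)^p/(u+\eps)^{p-1}$, which does belong to $\w$ for $e_1(m)\in\w\cap L^\infty(\Omega)$ and $u\ge 0$ by the algebraic lemma behind the discrete Picone inequality (Brasco--Franzina), obtain
\[
\lambda\int_\Omega m\,\frac{u^{p-1}}{(u+\eps)^{p-1}}\,e_1(m)^p\,dx+\int_\Omega\beta\,\frac{e_1(m)^p}{(u+\eps)^{p-1}}\,dx\le\|e_1(m)\|^p=\lambda_1(m)\int_\Omega m\,e_1(m)^p\,dx,
\]
and let $\eps\to 0^+$ by monotone convergence. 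With that adjustment the contradiction goes through exactly as you describe, and the conclusion $u^-\neq 0$ follows.
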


\noindent
Now we focus on the variational formulation of problem \eqref{dir}. First, set for all $(x,t)\in\Omega\times\R$
\[F(x,t) = \int_0^t f(x,\tau)\,d\tau.\]
Then, set for all $u\in\w$
\[\Phi(u) = \frac{\|u\|^p}{p}-\int_\Omega F(x,u)\,dx.\]
By virtue of ${\bf H}_0$, it is easily seen that $\Phi\in C^1(\w)$. For all $u\in\w$ we have
\[\Phi'(u) = \fpl u-N_f(u),\]
where $N_f:\w\to W^{-s,p'}(\Omega)$ is the completely continuous Nemitskii operator defined for all $u,\varphi\in\w$ by
\[\langle N_f(u),\varphi\rangle = \int_\Omega f(x,u)\varphi\,dx.\]
So, as seen in Section \ref{sec2}, $\Phi':\w\to W^{-s,p'}(\Omega)$ is a (demi)-continuous $(S)_+$-map. In addition, $\Phi$ is sequentially weakly l.s.c.\ and satisfies a bounded $(PS)$-condition, i.e., whenever $(u_n)$ is a bounded sequence in $\w$ s.t.\ $|\Phi(u_n)|\le C$ for all $n\in\N$ and $\Phi'(u_n)\to 0$ in $W^{-s,p'}(\Omega)$, then up to a subsequence $u_n\to u$ in $\w$.
\vskip2pt
\noindent
Finally, we recall the equivalence between Sobolev and H\"older local minimizers of $\Phi$:

\begin{proposition}\label{svh}
{\rm \cite[Theorem 1.1]{IMS2}} Let ${\bf H}_0$ hold, $u\in\w$. Then, the following are equivalent:
\begin{enumroman}
\item\label{svh1} there exists $\sigma>0$ s.t.\ $\Phi(u+v)\ge\Phi(u)$ for all $v\in\w\cap C_s^0(\overline\Omega)$, $\|v\|_{0,s}\le\sigma$;
\item\label{svh2} there exists $\rho>0$ s.t.\ $\Phi(u+v)\ge\Phi(u)$ for all $v\in\w$, $\|v\| \le\rho$.
\end{enumroman}
\end{proposition}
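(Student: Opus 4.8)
This is the nonlocal analogue of the Brezis--Nirenberg equivalence of \emph{weak} and \emph{strong} local minimizers; I would prove the two implications separately, \ref{svh2}$\Rightarrow$\ref{svh1} being elementary and \ref{svh1}$\Rightarrow$\ref{svh2} carrying all the difficulty.

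\textbf{The implication \ref{svh2}$\Rightarrow$\ref{svh1}.} Argue by contradiction: suppose $v_n\in\w\cap\cs$ satisfy $\|v_n\|_{0,s}\to 0$ and $\Phi(u+v_n)<\Phi(u)$. Since $\|v_n\|_\infty\le\|\ds\|_\infty\,\|v_n\|_{0,s}\to 0$, we get $u+v_n\to u$ in $L^r(\Omega)$, hence $\int_\Omega F(x,u+v_n)\,dx\to\int_\Omega F(x,u)\,dx$ by ${\bf H}_0$; together with $\Phi(u+v_n)<\Phi(u)$ this forces $\limsup_n\|u+v_n\|\le\|u\|$. Being bounded in the reflexive space $\w$ and convergent to $u$ in $L^p(\Omega)$, $(u+v_n)$ converges weakly to $u$ in $\w$; lower semicontinuity of the norm gives $\|u+v_n\|\to\|u\|$, and uniform convexity of $\w$ then gives $u+v_n\to u$ in $\w$. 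Thus $\|v_n\|\le\rho$ for large $n$, contradicting \ref{svh2}.

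\textbf{The implication \ref{svh1}$\Rightarrow$\ref{svh2}.} First, \ref{svh1} already makes $u$ a critical point: for $v\in\w\cap\cs$ the function $t\mapsto\Phi(u+tv)$ has a minimum at $t=0$, so $\langle\Phi'(u),v\rangle=0$; since $C^\infty_c(\Omega)\subset\w\cap\cs$ is dense in $\w$ and $\Phi'(u)\in W^{-s,p'}(\Omega)$, we get $\Phi'(u)=0$, so $u$ solves \eqref{dir} and, by Propositions \ref{apb} and \ref{reg}, $u\in C^\alpha_s(\overline\Omega)\cap L^\infty(\Omega)$. Suppose now \ref{svh2} fails, so $\inf_{\overline B_\rho(u)}\Phi<\Phi(u)$ for every $\rho>0$, where $\overline B_\rho(u)=\{w\in\w:\|w-u\|\le\rho\}$. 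For $\rho\in(0,1]$, sequential weak lower semicontinuity of $\Phi$ and weak compactness of $\overline B_\rho(u)$ give a minimizer $w_\rho$ of $\Phi$ over $\overline B_\rho(u)$ with $\Phi(w_\rho)<\Phi(u)$, so $w_\rho\neq u$. \emph{(a)} If $\|w_\rho-u\|<\rho$, then $\Phi'(w_\rho)=0$ and $w_\rho$ solves \eqref{dir}; as $\|w_\rho\|\le\|u\|+1$, Proposition \ref{apb} gives $\|w_\rho\|_\infty\le C$ and then \cite[Theorem 1.1]{IMS1} gives $\|w_\rho\|_{\alpha,s}\le C$, with $C$ independent of $\rho$. \emph{(b)} If $\|w_\rho-u\|=\rho$, the Lagrange multiplier rule yields $\mu_\rho\ge 0$ with $\fpl w_\rho+\mu_\rho\fpl(w_\rho-u)=N_f(w_\rho)$ in $W^{-s,p'}(\Omega)$; testing with $(w_\rho-u-k)^+$ ($k\ge 0$) and using that $t\mapsto(t-k)^+$ is nondecreasing and $1$-Lipschitz — so that $\langle\fpl(w_\rho-u),(w_\rho-u-k)^+\rangle\ge 0$ by the $(T)$-monotonicity of $\fpl$ (cf.\ Proposition \ref{stm}), while $\langle\fpl w_\rho-\fpl u,(w_\rho-u-k)^+\rangle$ is bounded below by $c\,\|(w_\rho-u-k)^+\|^p$ — the multiplier term drops out, a De~Giorgi iteration in the spirit of \cite{CMS} yields $\|w_\rho-u\|_\infty\le C$, hence $\|w_\rho\|_\infty,\|f(\cdot,w_\rho)\|_\infty\le C$ uniformly in $\rho$, and then a global $C^\alpha_s$-estimate for the perturbed operator $w\mapsto\fpl w+\mu_\rho\fpl(w-u)$ (same ellipticity and growth as $\fpl$, monotone extra term) gives $\|w_\rho\|_{\alpha,s}\le C$ uniformly. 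In both cases $\{w_\rho\}$ is bounded in $C^\alpha_s(\overline\Omega)\hookrightarrow\hookrightarrow\cs$, and since $\|w_\rho-u\|\le\rho\to 0$ pins the only $\cs$-limit point to $u$, we get $w_\rho\to u$ in $\cs$; hence for small $\rho$ we have $\|w_\rho-u\|_{0,s}\le\sigma$, so $\Phi(w_\rho)\ge\Phi(u)$ by \ref{svh1}, contradicting $\Phi(w_\rho)<\Phi(u)$.

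\textbf{Main obstacle.} The hard part is the uniform $C^\alpha_s$-bound in case (b): the multiplier $\mu_\rho$ may be unbounded as $\rho\to 0^+$ (one only obtains $\mu_\rho\|w_\rho-u\|^{p-1}\to 0$), so $\mu_\rho\fpl(w_\rho-u)$ is controlled only in $W^{-s,p'}(\Omega)$ and not in $L^\infty(\Omega)$; the regularity must therefore be extracted from a boundary estimate for $\fpl w+\mu_\rho\fpl(w-u)$ that is stable as $\mu_\rho$ ranges over $[0,\infty)$. This is precisely where the fine boundary regularity of \cite{IMS1} and the $C^{1,1}$-smoothness of $\partial\Omega$ are used decisively.
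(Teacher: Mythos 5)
This proposition is not proved in the paper at all: it is quoted from \cite[Theorem 1.1]{IMS2}, so the only possible comparison is with the proof in that reference. Your reconstruction follows the same Brezis--Nirenberg scheme as \cite{IMS2}. The implication \ref{svh2}$\Rightarrow$\ref{svh1} (via the ${\bf H}_0$-dominated convergence of the potential term, weak lower semicontinuity, and uniform convexity of $\w$) is complete and correct, and your strategy for \ref{svh1}$\Rightarrow$\ref{svh2} — minimize $\Phi$ on the balls $\overline B_\rho(u)$, split into the free and constrained cases, extract a $\rho$-uniform $C^\alpha_s$-bound on the minimizers $w_\rho$, and use the compact embedding $C^\alpha_s(\overline\Omega)\hookrightarrow\cs$ to contradict \ref{svh1} — is exactly the architecture of the cited proof.

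That said, case (b) contains a genuine gap, located precisely where you place the ``main obstacle''. First, the intermediate inequality $\langle\fpl w_\rho-\fpl u,(w_\rho-u-k)^+\rangle\ge c\,\|(w_\rho-u-k)^+\|^p$ does not follow from $(T)$-monotonicity (Proposition \ref{stm}) nor from the elementary bound $(|a|^{p-2}a-|b|^{p-2}b)(a-b)\ge c_p|a-b|^p$: the test function is a \emph{truncation} of the difference, so its increments $(w_\rho-u-k)^+(x)-(w_\rho-u-k)^+(y)$ need only be dominated by, not comparable to, those of $w_\rho-u$, and no such coercive lower bound is available for the nonlocal operator. Second, and more importantly, the uniform $C^\alpha_s$-estimate for the perturbed operator $w\mapsto\fpl w+\mu_\rho\fpl(w-u)$, stable as $\mu_\rho$ ranges over $[0,\infty)$, is not an off-the-shelf consequence of ``same ellipticity and growth'': it is a new regularity theorem (an $L^\infty$ bound by a De Giorgi-type iteration adapted to the sum of two fractional $p$-Laplacian-type terms, followed by interior and weighted boundary H\"older estimates uniform in $\mu$) whose proof constitutes the bulk of \cite{IMS2} and rests on the fine boundary regularity of \cite{IMS1}. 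So your proposal correctly identifies both the architecture and the crux, but the crux itself is asserted rather than proved; as a self-contained argument it is incomplete, which is presumably why the present paper imports the statement by citation rather than proving it.
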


\begin{remark}\label{sin}
All results of this section also hold in the singular case $p\in (1,2)$, but the regularity result Proposition \ref{reg}, which has only been proved for the degenerate case $p\ge 2$ so far, and consequently Proposition \ref{svh}.
\end{remark}

\section{Coercive case}\label{sec4}

\noindent
In this section we deal with the case when the limits of the quotient
\[t\mapsto\frac{f(x,t)}{t^{p-1}}\]
both at zero and infinity lie below the principal eigenvalue $\lambda_1$ without resonance (in fact we will assume a slightly more general condition). This case is called {\em coercive}, since the energy functional corresponding to problem \eqref{dir} tends to infinity as $\|u\|\to\infty$. In order to detect a positive subsolution, we will make use of an auxiliary Dirichlet problem for the {\em $p$-fractional Lane-Emden equation}:
\beq\label{lee}
\begin{cases}
\fpl v = |v|^{q-2}v & \text{in $\Omega$} \\
v = 0 & \text{in $\Omega^c$.}
\end{cases}
\eeq
Here $q\in(p,p^*_s)$, and recalling that $\w\hookrightarrow L^q(\Omega)$ we set
\[c_q = \inf_{u\in\w\setminus\{0\}}\frac{\|u\|}{\|u\|_q} > 0.\]
The following technical result is a nonlocal version of \cite[Theorem 1]{O}:

\begin{lemma}\label{ota}
Let $q\in(p,p^*_s)$. Then problem \eqref{lee} has at least one solution $v_q\in{\rm int}(\cs_+)$ s.t.\
\[\|v_q\|^p = \|v_q\|_q^q = c_q^\frac{pq}{q-p}.\]
\end{lemma}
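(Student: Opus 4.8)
The plan is to obtain $v_q$ by minimization on the Nehari-type manifold associated to the $C^1$ energy functional
\[
J(u) = \frac{\|u\|^p}{p} - \frac{\|u\|_q^q}{q},
\]
whose critical points are exactly the weak solutions of \eqref{lee}. First I would exploit the homogeneity: for $u\neq 0$ the function $\tau\mapsto J(\tau u)$ on $(0,\infty)$ has a unique maximizer $\tau(u)>0$ with $\tau(u)^{p} = \|u\|^p/\|u\|_q^q$ (since $q>p$), and along this ray $J(\tau(u)u) = \big(\tfrac1p-\tfrac1q\big)\big(\|u\|^p/\|u\|_q^{q}\big)^{\frac{p}{q-p}}\cdot\|u\|_q^{q}$, which after simplification equals $\big(\tfrac1p-\tfrac1q\big)\big(\|u\|/\|u\|_q\big)^{\frac{pq}{q-p}}$. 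Hence
\[
\inf_{u\in\w\setminus\{0\}}\max_{\tau>0}J(\tau u) = \Big(\tfrac1p-\tfrac1q\Big)\,c_q^{\frac{pq}{q-p}},
\]
and this infimum is attained precisely when $u$ realizes $c_q$, i.e.\ is a minimizer of $\|u\|/\|u\|_q$. Such a minimizer $u_0$ exists by the direct method, using the reflexivity of $\w$, the weak lower semicontinuity of $\|\cdot\|$, and the \emph{compact} embedding $\w\hookrightarrow L^q(\Omega)$ (available since $q<p^*_s$), which turns a minimizing sequence into a strong $L^q$-convergent one; normalizing and rescaling, $v_q = \tau(u_0)u_0$ solves \eqref{lee}. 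Replacing $v_q$ by $|v_q|$ (which has the same norm and $L^q$-norm, hence is still a minimizer) we may take $v_q\ge0$, and a direct computation gives $\|v_q\|^p = \|v_q\|_q^q = c_q^{\frac{pq}{q-p}}$.

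The next step is to upgrade $v_q$ to a strictly positive element of $\mathrm{int}(\cs_+)$. Since $v_q\in\w$ solves \eqref{lee}, which is of the form \eqref{dir} with reaction $f(x,t)=|t|^{q-2}t$ satisfying ${\bf H}_0$ (as $q<p^*_s$), Proposition \ref{apb} yields $v_q\in L^\infty(\Omega)$ and Proposition \ref{reg} gives $v_q\in C^\alpha_s(\overline\Omega)\subset C^0(\overline\Omega)$. To get positivity I would apply the strong maximum principle Proposition \ref{scp}\ref{scp1} with $g(t) = |t|^{q-2}t$ (continuous, and locally $BV$ since it is $C^1$): $v_q$ solves $\fpl v_q + g(v_q) = g(v_q) + g(v_q)\cdot 0$... more precisely $\fpl v_q = g(v_q)\ge 0 = g(0)$ in $\Omega$ because $v_q\ge0$, and $v_q\ge 0$ in $\R^N$ with $v_q\neq 0$ (its norm is positive), so $\inf_\Omega v_q/\ds>0$, i.e.\ $v_q\in\mathrm{int}(\cs_+)$.

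The main obstacle I anticipate is not the abstract minimization but making the fibering/Nehari computation airtight in the nonlocal setting and, in particular, verifying that a minimizer of $\|\cdot\|/\|\cdot\|_q$, after the scaling $\tau(u_0)$, is genuinely a critical point of $J$ rather than merely a constrained critical point — this requires checking that the Lagrange multiplier is the correct one, which follows from the scaling identity $\tau(u_0)^{p-q} = \|u_0\|_q^q/\|u_0\|^p$ together with the fact that at a minimizer the Fréchet derivatives of $\|u\|^p$ and $\|u\|_q^q$ are proportional with ratio $c_q^p$. An alternative, perhaps cleaner, route is to minimize $J$ directly over the manifold $\mathcal{N} = \{u\neq 0:\ \langle J'(u),u\rangle = 0\} = \{u\neq0:\ \|u\|^p=\|u\|_q^q\}$: on $\mathcal{N}$ one has $J|_{\mathcal N}(u) = (\tfrac1p-\tfrac1q)\|u\|^p$, $\mathcal N$ is bounded away from $0$ (by the embedding $\|u\|_q\le c_q^{-1}\|u\|$, so $\|u\|^{p}=\|u\|_q^q\le c_q^{-q}\|u\|^q$ forces $\|u\|\ge c_q^{q/(q-p)}$, with equality characterizing the minimizers), a minimizing sequence is bounded, and compactness of $\w\hookrightarrow L^q$ again delivers a minimizer whose constrained criticality transfers to $J$ by a standard argument since $\langle (\|\cdot\|_q^q)',u\rangle = q\|u\|_q^q\neq0$ on $\mathcal N$. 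Either way the quantitative identity $\|v_q\|^p=\|v_q\|_q^q=c_q^{pq/(q-p)}$ drops out of the scaling, and the regularity-plus-SMP chain gives $v_q\in\mathrm{int}(\cs_+)$.
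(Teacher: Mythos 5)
Your proposal is correct and follows essentially the same route as the paper: minimize the quotient $\|u\|/\|u\|_q$ via the direct method and the compact embedding $\w\hookrightarrow L^q(\Omega)$, rescale the minimizer so that the Lagrange multiplier equals $1$ (the paper constrains to the $L^q$-sphere at level $c_q^{pq/(q-p)}$, you to the Nehari manifold --- the same computation), then conclude with Propositions \ref{apb}, \ref{reg} and \ref{scp} \ref{scp1}. The only blemishes are harmless slips in the fibering computation (the maximizer satisfies $\tau(u)^{q-p}=\|u\|^p/\|u\|_q^q$, not $\tau(u)^{p}=\|u\|^p/\|u\|_q^q$, and the intermediate expression for $J(\tau(u)u)$ is garbled), which do not affect the correct final identity $\bigl(\tfrac1p-\tfrac1q\bigr)\bigl(\|u\|/\|u\|_q\bigr)^{pq/(q-p)}$ or the rest of the argument.
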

\begin{proof}
Set
\[\mathcal{S}_q = \big\{v\in\w:\,\|v\|_q^q=c_q^\frac{pq}{q-p}\big\}.\]
By the compact embedding $\w\hookrightarrow L^q(\Omega)$, it is easily seen that $\mathcal{S}_q$ is sequentially weakly closed as a subset of $\w$. Also for all $v\in\mathcal{S}_q$ we have
\beq\label{ota1}
\|v\|^p \ge c_q^p\|v\|_q^p = c_q^\frac{pq}{q-p}.
\eeq
By definition of $c_q$, there exists a sequence $(u_n)$ in $\w\setminus\{0\}$ s.t.\ 
\[\frac{\|u_n\|}{\|u_n\|_q} \to c_q.\]
By replacing if necessary $u_n$ with $|u_n|$, we may assume $u_n\ge 0$ in $\Omega$, for all $n\in\N$. Set
\[v_n = c_q^\frac{p}{q-p}\frac{u_n}{\|u_n\|_q} \in \mathcal{S}_q,\]
then $\|v_n\|^p\to c_q^\frac{pq}{q-p}$. In particular, $(v_n)$ is bounded in $\w$. Passing if necessary to a subsequence, we have $v_n\rightharpoonup v_q$ in $\w$, $v_n\to v_q$ in $L^q(\Omega)$, and $v_n(x)\to v_q(x)$ for a.e.\ $x\in\Omega$. So $v_q\in\mathcal{S}_q$ and $v_q\ge 0$ in $\Omega$. Also,
\[\|v_q\|^p \le \liminf_n\|v_n\|^p = c_q^\frac{pq}{q-p},\]
which along with \eqref{ota1} gives
\[\|v_q\|^p = \|v_q\|_q^q = c_q^\frac{pq}{q-p}.\]
Thus, $v_q$ is a minimizer of the functional $v\mapsto\|v\|^p$ restricted the $C^1$-manifold $\mathcal{S}_q$. By Lagrange's multipliers rule, there exists $\mu\in\R$ s.t.\ for all $\varphi\in\w$
\[\langle\fpl v_q,\varphi\rangle = \mu \int_\Omega v_q^{q-1}\varphi\,dx.\]
Testing the relation above with $v_q\in\w$ we get
\[\|v_q\|^p = \mu\|v_q\|_q^q,\]
which implies $\mu=1$. So $v_q$ is a non-negative weak solution of \eqref{lee}, in the sense of Section \ref{sec3}. Clearly the reaction
\[(x,t) \mapsto |t|^{q-2}t\]
satisfies ${\bf H}_0$, so by Proposition \ref{reg} we have $v_q\in C^\alpha_s(\overline\Omega)$. Now apply Proposition \ref{scp} \ref{scp1} (with $g(t)=-|t|^{q-2}t$) to find $v_q\in{\rm int}(\cs_+)$.
\end{proof}

\noindent
Our hypotheses on the reaction $f$, in the present case, are the following:
\begin{itemize}[leftmargin=1cm]
\item[${\bf H}_1$] $f:\Omega\times\R\to\R$ is a Carath\'eodory function and
\begin{enumroman}
\item\label{h11} for all $M>0$ there exists $a_M\in L^\infty(\Omega)_+$ s.t.\ for a.e.\ $x\in\Omega$ and all $t\in[0,M]$
\[0 \le f(x,t) \le a_M(x);\]
\item\label{h12} there exists $\theta\in L^\infty(\Omega)_+$ s.t.\ $\theta\le\lambda_1$ in $\Omega$, $\theta\neq\lambda_1$, and uniformly for a.e.\ $x\in\Omega$
\[\limsup_{t\to\infty}\frac{f(x,t)}{t^{p-1}} \le \theta(x);\]
\item\label{h13} there exists $\eta\in L^\infty(\Omega)_+$ s.t.\ $\eta\le\lambda_1$ in $\Omega$, $\eta\neq\lambda_1$, and uniformly for a.e.\ $x\in\Omega$
\[\limsup_{t\to 0^+}\frac{f(x,t)}{t^{p-1}} \le \eta(x);\]
\item\label{h14} there exists $q\in(p,p^*_s)$ s.t.\ for a.e.\ $x\in\Omega$ and all $t>0$
\[f(x,t) > \min\big\{t,\,\|v_q\|_\infty\big\}^{q-1}.\]
\end{enumroman}
\end{itemize}
Hypotheses ${\bf H}_1$ above only concern the behavior of $f(x,\cdot)$ in the positive semiaxis, but since we are seeking positive solutions, we can use ${\bf H}_1$ \ref{h13} and set for all $x\in\Omega$, $t\in\R^-$
\[f(x,t) = 0.\]
Clearly ${\bf H}_1$ imply ${\bf H}_0$, hence all results of Section \ref{sec3} apply. Hypotheses ${\bf H}_1$ \ref{h12} \ref{h13} conjure a $(p-1)$-sublinear growth at infinity and a $(p-1)$-superlinear growth at the origin, with the principal eigenvalue $\lambda_1>0$ (defined in Proposition \ref{pev}) as a threshold slope. Also, $v_q\in{\rm int}(\cs_+)$ in ${\bf H}_1$ \ref{h14} is defined by Lemma \ref{ota}.

\begin{example}
Assume that, for convenient $\Omega$, $p$, $q$ we have $\lambda_1>\|v_q\|_\infty^{q-p}$. Then we can find $\lambda\in(0,\lambda_1)$, $\mu>1$ s.t.\ $\lambda>\mu\|v_q\|_\infty^{q-p}$. So set for all $t\in\R$
\[f(t) = \begin{cases}
0 & \text{if $t\le 0$} \\
\mu t^{q-1} & \text{if $0<t\le\|v_q\|_\infty$} \\
\lambda t^{p-1}+\mu\|v_q\|_\infty^{q-1}-\lambda\|v_q\|_\infty^{p-1} & \text{if $t>\|v_q\|_\infty$.}
\end{cases}\]
Elementary calculations show that $f\in C^0(\R)$ satisfies ${\bf H}_1$.
\end{example}

\noindent
Our multiplicity result for the coercive case is the following, extending \cite[Theorem 11.13]{MMP} to the nonlocal framework (see also the first part of \cite[Theorem 1]{MMP1}):

\begin{theorem}\label{coe}
Let ${\bf H}_1$ hold. Then, problem \eqref{dir} has at least two solutions $u_1,u_2\in{\rm int}(\cs_+)$.
\end{theorem}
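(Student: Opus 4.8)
The plan is to produce the first solution $u_1$ as a local minimizer of $\Phi$ via a sub–supersolution construction, and the second solution $u_2$ by a degree-theoretic argument on a suitable truncated functional, following the scheme of \cite[Theorem 11.13]{MMP}. First I would build an ordered pair of sub- and supersolutions. For the subsolution, take $\underline u = v_q \in {\rm int}(\cs_+)$ from Lemma \ref{ota}: by ${\bf H}_1$ \ref{h14}, $f(x,v_q) \ge \min\{v_q,\|v_q\|_\infty\}^{q-1} = v_q^{q-1} = \fpl v_q$ in $\Omega$, so $v_q$ is a subsolution of \eqref{dir}. For the supersolution, use the sublinear growth at infinity: by ${\bf H}_1$ \ref{h12} there is $\theta \le \lambda_1$, $\theta \neq \lambda_1$, with $f(x,t) \le \theta(x) t^{p-1} + C$ for all $t \ge 0$; I would then solve the auxiliary problem $\fpl \bar u = \theta(x)\bar u^{p-1} + C$ in $\Omega$, $\bar u = 0$ in $\Omega^c$. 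Existence and positivity of $\bar u$ follow because the associated functional $u \mapsto \|u\|^p/p - \int_\Omega (\theta(x)/p)(u^+)^p + C u^+\,dx$ is coercive (Proposition \ref{var}) and weakly l.s.c., and its minimizer is positive by Proposition \ref{scp} \ref{scp1}; by Proposition \ref{reg} one has $\bar u \in \cs$, hence $\bar u \in {\rm int}(\cs_+)$. A standard comparison argument (Proposition \ref{stm}, after possibly enlarging $\bar u$ by a large multiple of $\bar u$ itself or choosing the constant appropriately, using that $v_q$ solves an equation with a power nonlinearity dominated at large values) yields $v_q \le \bar u$ in $\Omega$; here the ordering may require one to note $\inf_\Omega (\bar u - v_q)/\ds > 0$ via Proposition \ref{scp} \ref{scp2}.

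Next I would truncate: let $\hat f(x,t) = f(x,\bar u(x))$ for $t > \bar u(x)$, $\hat f(x,t) = f(x,t)$ for $0 \le t \le \bar u(x)$, and $\hat f(x,t) = 0$ for $t < 0$, with primitive $\hat F$ and truncated functional $\hat\Phi(u) = \|u\|^p/p - \int_\Omega \hat F(x,u)\,dx$. Since $\hat f$ is bounded, $\hat\Phi$ is coercive and weakly l.s.c., so it has a global minimizer $u_1$. The usual argument — testing $\hat\Phi'(u_1)$ against $(u_1 - \bar u)^+$ and against $u_1^-$ and invoking Proposition \ref{stm} — shows $0 \le u_1 \le \bar u$, hence $u_1$ solves \eqref{dir} and $\hat f = f$ near $u_1$. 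To see $u_1 \neq 0$: by ${\bf H}_1$ \ref{h14}, $F(x,t) \ge \frac{1}{q}\min\{t,\|v_q\|_\infty\}^q \gtrsim t^q$ for small $t$, so plugging $t e_1$ with $t \to 0^+$ into $\Phi$ gives $\Phi(t e_1) \le \frac{t^p}{p}\|e_1\|^p - c\, t^q\|e_1\|_q^q < 0$ for small $t$ (as $q > p$), whence $\hat\Phi(u_1) < 0 = \hat\Phi(0)$. Moreover $u_1 \ge v_q$: indeed one checks $v_q$ is a subsolution of the $\hat f$-problem as well and compares. Then Proposition \ref{scp} \ref{scp1} (with a suitable $g$ absorbing the bounded term $f(x,u_1) + g(u_1) - g(0) \ge 0$, using ${\bf H}_1$ \ref{h11}) gives $u_1 \in {\rm int}(\cs_+)$. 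Finally, $u_1$ is a local minimizer of $\Phi$ itself in the $\cs$-topology (since $u_1 \in {\rm int}(\cs_+)$, a small $\cs$-ball around $u_1$ lies in $[v_q,\bar u]$ where $\hat\Phi = \Phi$ up to a constant), hence by Proposition \ref{svh} a local minimizer of $\Phi$ in $\w$.

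For the second solution I would argue by contradiction: assume $u_1$ is the only solution of \eqref{dir} (in particular the only critical point of $\Phi$ in the relevant region); then it is an isolated critical point and a local minimizer of $\Phi$, so by Proposition \ref{rab} \ref{rab1}, $\deg_{(S)_+}(\Phi', B_\rho(u_1), 0) = 1$ for small $\rho > 0$. On the other hand, ${\bf H}_1$ \ref{h12} \ref{h13} make $\Phi$ coercive on all of $\w$: combining the growth bounds with Proposition \ref{var} applied to $\theta$ gives $\Phi(u) \ge \frac{\sigma}{p}\|u\|^p - C$, so by Proposition \ref{rab} \ref{rab2}, $\deg_{(S)_+}(\Phi', B_R(0), 0) = 1$ for large $R$. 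Now I would show that $\Phi$ has no critical points other than $u_1$ and $0$ outside $B_\rho(u_1)$ — but $0$ is not a solution of \eqref{dir} in general; rather, one should work with a functional $\tilde\Phi$ associated to the reaction truncated below $v_q$ as well, whose only critical points are those solutions $u$ with $u \ge v_q$, so that $\tilde\Phi' \neq 0$ on $B_R(0) \setminus \overline{B_\rho(u_1)}$. By excision and domain additivity (Proposition \ref{deg} \ref{deg2} \ref{deg3}), $1 = \deg_{(S)_+}(\tilde\Phi', B_R(0), 0) = \deg_{(S)_+}(\tilde\Phi', B_\rho(u_1), 0) = 1$ — consistent, so instead the contradiction must come from computing the degree over a large ball differently: using that $v_q$ is a strict subsolution forces, via a homotopy to an operator with no zeros (e.g. $u \mapsto \fpl u - \fpl v_q - \beta$ with $\beta > 0$ and Proposition \ref{amp}-type nonexistence, or translating so the minimizer sits at a sublevel set that deformation-retracts), $\deg_{(S)_+} = 0$ on the annular region, contradicting additivity. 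The cleanest route, and the one I would pursue, is: take $R$ large and $\rho$ small with $\overline{B_\rho(u_1)} \subset B_R(0)$; then $1 = \deg(\tilde\Phi', B_R(0),0) = \deg(\tilde\Phi', B_\rho(u_1),0) + \deg(\tilde\Phi', B_R(0)\setminus\overline{B_\rho(u_1)},0) = 1 + \deg(\tilde\Phi', B_R(0)\setminus\overline{B_\rho(u_1)},0)$, so the last degree is $0$; one must then independently show this last degree is nonzero (hence a third critical point, contradiction) by a mountain-pass-type deformation using that $u_1$ is a \emph{local} minimizer while $\tilde\Phi$ is coercive — this produces a critical point of mountain-pass type distinct from $u_1$.

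The main obstacle I expect is precisely the degree computation on the annular region $B_R(0) \setminus \overline{B_\rho(u_1)}$: one needs to set up the correct truncated functional whose critical points are \emph{exactly} the positive solutions $u \ge v_q$ of \eqref{dir} (so that the bounded $(PS)$ condition and the $(S)_+$ property transfer), verify that there are no critical points on the boundary spheres, and then either exhibit a second critical point by a min-max argument between the local minimum at $u_1$ and the coercive geometry, or — following \cite{MMP1} — by computing that the degree over $B_R(0)$ equals $0$ rather than $1$ because of the strict subsolution $v_q$ (via an antimaximum/nonexistence argument in the spirit of Proposition \ref{amp}), contradicting $1 = \deg(\tilde\Phi',B_\rho(u_1),0)$. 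Handling the interaction between the $\w$-topology (where degree lives) and the $\cs$-topology (where $u_1 \in {\rm int}(\cs_+)$ and where the local-minimizer property is transparent), via Proposition \ref{svh}, is the technical heart of the argument.
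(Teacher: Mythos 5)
There is a genuine gap, and it sits exactly where you locate your ``main obstacle'': the degree computation. The key idea you are missing is that, under ${\bf H}_1$ \ref{h13} (the limsup at $0^+$ lies below $\lambda_1$ without resonance), \emph{the origin is itself a strict local minimizer of $\Phi$}: since $F(x,t)\le\frac{\eta(x)+\eps}{p}t^p$ for small $t$, Proposition \ref{var} gives $\Phi(u)\ge(\sigma-\eps/\lambda_1)\|u\|^p/p\ge 0$ for $u$ small in the $\cs$-norm, and Proposition \ref{svh} upgrades this to a local minimum in $\w$. With \emph{two} isolated local minimizers $0$ and $u_1$ (if either is non-isolated one is done at once), Proposition \ref{rab} \ref{rab1} gives degree $1$ on small balls around each, coercivity of $\Phi$ gives degree $1$ on a large ball, and domain additivity yields $1=1+1+d$, i.e.\ $d=-1$ on the annulus, producing $u_2$. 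Your bookkeeping with the single minimizer $u_1$ gives $1=1+0$, which is consistent and yields nothing, as you concede; and your fallback of showing the total degree over $B_R(0)$ is $0$ cannot work here, because $\Phi$ \emph{is} coercive (that computation belongs to the noncoercive case, Theorem \ref{nco}). A mountain-pass alternative is possible (cf.\ Remark \ref{mpt}), but it too runs between the two local minimizers $0$ and $u_1$, so the observation about $0$ is unavoidable.

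Two further points. First, your argument that $u_1\neq 0$ is wrong as stated: since $q>p$, for small $t>0$ one has $\frac{t^p}{p}\|e_1\|^p-c\,t^q\|e_1\|_q^q>0$, not $<0$ (indeed $0$ is a local minimizer, consistent with the above). The paper avoids this by truncating \emph{from below} at $v_q$, setting $\hat f(x,t)=f(x,\max\{t,v_q(x)\})$; then testing with $(v_q-u_1)^+$ and using ${\bf H}_1$ \ref{h14} gives $u_1\ge v_q>0$ via Proposition \ref{stm}, because on $\{u_1<v_q\}$ the truncation forces $\hat f(x,u_1)=f(x,v_q)\ge v_q^{q-1}$. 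Under your above-only truncation the analogous comparison fails (you would need $f(x,u_1)\ge v_q^{q-1}$ on $\{u_1<v_q\}$, which ${\bf H}_1$ \ref{h14} does not give), so ``one checks $v_q$ is a subsolution and compares'' does not close. Second, your supersolution $\bar u$ and the delicate ordering $v_q\le\bar u$ are unnecessary: coercivity of the truncated functional (and of $\Phi$) follows directly from ${\bf H}_1$ \ref{h12} together with Proposition \ref{var}, so no truncation from above is needed.
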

\begin{proof}
By ${\bf H}_1$ (and the extension of $f$ to the negative semiaxis) we can define the primitive $F:\Omega\times\R\to\R$ and the functional $\Phi\in C^1(\w)$ as in Section \ref{sec3}.
\vskip2pt
\noindent
Our first claim is that $0$ is a local minimizer of $\Phi$. Let $\sigma>0$ be as in Proposition \ref{var} (with $\xi=\eta$ from ${\bf H}_1$ \ref{h13}). Fix $\eps\in(0,\sigma\lambda_1)$, then by ${\bf H}_1$ \ref{h13} and de l'H\^opital's rule there exists $\delta>0$ s.t.\ for a.e.\ $x\in\Omega$ and all $t\in[0,\delta]$
\[F(x,t) \le \frac{\eta(x)+\eps}{p}t^p.\]
By Proposition \ref{var} and the variational characterization of $\lambda_1$, for all $u\in\w\cap\cs$ with $\|u\|_\infty\le\delta$ we have
\begin{align*}
\Phi(u) &\ge \frac{\|u\|^p}{p}-\int_\Omega\frac{\eta(x)+\eps}{p}(u^+)^p\,dx \\
&\ge \frac{1}{p}\Big(\|u\|^p-\int_\Omega\eta(x)|u|^p\,dx\Big)-\frac{\eps}{p}\|u\|_p^p \\
&\ge \Big(\sigma-\frac{\eps}{\lambda_1}\Big)\frac{\|u\|^p}{p} \ge 0.
\end{align*}
Since $\Phi(0)=0$ and $\cs\hookrightarrow L^\infty(\Omega)$, we see that $0$ is a local minimizer in $\cs$ for $\Phi$. By Proposition \ref{svh}, it is such as well in $\w$, as claimed.
\vskip2pt
\noindent
Now let $v_q\in{\rm int}(\cs_+)$ be as in Lemma \ref{ota}. By ${\bf H}_1$ \ref{h14} we have for all $\varphi\in\w_+\setminus\{0\}$
\[\langle\fpl v_q,\varphi\rangle = \int_\Omega v_q^{q-1}\varphi\,dx < \int_\Omega f(x,v_q)\varphi\,dx,\]
so $v_q$ is a (strict) subsolution of \eqref{dir}. Set for all $(x,t)\in\Omega\times\R$
\[\hat f(x,t) = f\big(x,\max\{t,v_q(x)\}\big), \ \hat F(x,t) = \int_0^t \hat f(x,\tau)\,d\tau,\]
so $\hat f:\Omega\times\R\to\R$ satisfies ${\bf H}_0$. Then set for all $u\in\w$
\[\hat\Phi(u) = \frac{\|u\|^p}{p}-\int_\Omega\hat F(x,u)\,dx.\]
As in Section \ref{sec3} we see that $\hat\Phi\in C^1(\w)$ and is sequentially weakly l.s.c. Moreover, $\hat\Phi$ is coercive. To see this, let $\sigma>0$ be as in Proposition \ref{var} (this time with $\xi=\theta$ from ${\bf H}_1$ \ref{h12}). Fix $\eps\in(0,\sigma\lambda_1)$, then by ${\bf H}_1$ \ref{h12} there exists $M>\|v_q\|_\infty$ s.t.\ for a.e.\ $x\in\Omega$ and all $t\ge M$
\[f(x,t) \le (\theta(x)+\eps)t^{p-1}.\]
Further, by ${\bf H}_1$ \ref{h11} we can find $a_M\in L^\infty(\Omega)_+$ s.t.\ for a.e.\ $x\in\Omega$ and all $t\ge M$
\begin{align*}
\hat F(x,t) &= \int_0^{v_q}f(x,v_q)\,d\tau+\int_{v_q}^M f(x,\tau)\,d\tau+\int_M^t f(x,\tau)\,d\tau \\
&\le a_M(x)M+\frac{\theta(x)+\eps}{p}(t^p-M^p).
\end{align*}
Using again ${\bf H}_1$ \ref{h11} we find $C>0$ s.t.\ for a.e.\ $x\in\Omega$ and all $t\in\R$
\[\hat F(x,t) \le \frac{\theta(x)+\eps}{p}(t^+)^p+C.\]
So, by Proposition \ref{var} and the variational characterization of $\lambda_1$, we have for all $u\in\w$
\begin{align*}
\hat\Phi(u) &\ge \frac{\|u\|^p}{p}-\int_\Omega\Big(\frac{\theta(x)+\eps}{p}(u^+)^p+C\Big)\,dx \\
&\ge \frac{1}{p}\Big(\|u\|^p-\int_\Omega\theta(x)|u|^p\,dx\Big)-\frac{\eps}{p}\|u\|_p^p-C \\
&\ge \Big(\sigma-\frac{\eps}{\lambda_1}\Big)\frac{\|u\|^p}{p}-C,
\end{align*}
and the latter tends to $\infty$ as $\|u\|\to\infty$. Thus, there exists $u_1\in\w$ s.t.\
\beq\label{coe1}
\hat\Phi(u_1) = \inf_{u\in\w}\hat\Phi(u).
\eeq
In particular, we have weakly in $\Omega$
\beq\label{coe2}
\fpl u_1 = \hat f(x,u_1).
\eeq
Testing \eqref{lee} and \eqref{coe2} with $(v_q-u_1)^+\in\w$, and using ${\bf H}_1$ \ref{h14}, we have
\[\langle\fpl v_q-\fpl u_1,(v_q-u_1)^+\rangle = \int_{\{u_1<v_q\}}\big(v_q^{q-1}-f(x,v_q)\big)(v_q-u_1)\,dx \le 0.\]
By Proposition \ref{stm} we have $u_1\ge v_q$ in $\Omega$. By construction, we may replace $\hat f$ with $f$ in \eqref{coe2} and see that $u_1$ solves \eqref{dir}. By Proposition \ref{reg}, then, we have $u_1\in C^\alpha_s(\overline\Omega)$. By ${\bf H}_1$ \ref{h14} we have weakly in $\Omega$
\[\fpl v_q = v_q^{q-1} \le f(x,u_1) = \fpl u_1.\]
Besides, since $v_q$ is a strict subsolution of \eqref{dir}, we have $u_1\neq v_q$. So Proposition \ref{scp} \ref{scp2} (with $g(t)=0$) implies
\[u_1-v_q \in {\rm int}(\cs_+),\]
in particular $u_1\in{\rm int}(\cs_+)$. Now set
\[V = \big\{v\in\w\cap\cs:\,v-v_q\in{\rm int}(\cs_+)\big\}.\]
Since $u_1-v_q\in{\rm int}(\cs_+)$, there exists $\sigma>0$ s.t.\ for all $v\in\w\cap\cs$ with $\|v-u_1\|_{0,s}\le\sigma$ we have $v\in V$. By \eqref{coe1} we have for all $v\in V$
\[\Phi(v) = \hat\Phi(v) \ge \hat\Phi(u_1) = \Phi(u_1),\]
hence $u_1$ is a local minimizer of $\Phi$ in $\cs$. By Proposition \ref{svh}, it is as well a local minimizer of $\Phi$ in $\w$.
\vskip2pt
\noindent
There remains to prove that $\Phi$ has a further critical point, beside $0$ and $u_1$. With this aim in mind, we distinguish two cases:
\begin{itemize}[leftmargin=1cm]
\item[$(a)$] If either $0$ or $u_1$ is not an isolated critical point of $\Phi$, then clearly $\Phi$ has infinitely many critical points.
\item[$(b)$] If both $0$ and $u_1$ are isolated critical points of $\Phi$, then in particular they are {\em strict} local minimizers. We then apply Proposition \ref{rab} \ref{rab1} and find for all $\rho>0$ small enough
\beq\label{coe3}
{\rm deg}_{(S)_+}(\Phi',B_\rho(0),0) = {\rm deg}_{(S)_+}(\Phi',B_\rho(u_1),0) = 1.
\eeq
Also, arguing as we did above with $\hat\Phi$, we see that $\Phi$ is coercive in $\w$. So, by Proposition \ref{rab} \ref{rab2} we have for all $R>0$ big enough
\beq\label{coe4}
{\rm deg}_{(S)_+}(\Phi',B_R(0),0) = 1.
\eeq
We choose $0<\rho<R$ in the relations above so that
\[\overline{B}_\rho(0)\cap\overline{B}_\rho(u_1) = \emptyset, \ \overline{B}_\rho(0)\cup\overline{B}_\rho(u_1) \subset B_R(0).\]
Using \eqref{coe3} \eqref{coe4} and Proposition \ref{deg} \ref{deg2} (domain additivity) twice, we get
\begin{align*}
1 &= {\rm deg}_{(S)_+}\big(\Phi',B_R(0),0) \\
&= {\rm deg}_{(S)_+}\big(\Phi',B_\rho(0),0)+{\rm deg}_{(S)_+}\big(\Phi',B_\rho(u_1),0)+{\rm deg}_{(S)_+}\big(\Phi',B_R(0)\setminus(\overline{B}_\rho(0)\cup\overline{B}_\rho(u_1)),0\big) \\
&= 2+{\rm deg}_{(S)_+}\big(\Phi',B_R(0)\setminus(\overline{B}_\rho(0)\cup\overline{B}_\rho(u_1)),0\big),
\end{align*}
which rephrases as
\[{\rm deg}_{(S)_+}\big(\Phi',B_R(0)\setminus(\overline{B}_\rho(0)\cup\overline{B}_\rho(u_1)),0\big) = -1.\]
Then, by Proposition \ref{deg} \ref{deg5} (solution), there exists $u_2\in B_R(0)\setminus(\overline{B}_\rho(0)\cup\overline{B}_\rho(u_1))$ s.t.\
\[\Phi'(u_2) = 0.\]
\end{itemize}
In either case, we end up with two nontrivial critical points $u_1,u_2\in K(\Phi)\setminus\{0\}$. We already know that $u_1\in{\rm int}(\cs_+)$ solves \eqref{dir}.
\vskip2pt
\noindent
So, let us consider $u_2$. By Proposition \ref{reg} we see that $u_2\in C^\alpha_s(\overline\Omega)$ solves \eqref{dir}. Testing with $-u_2^-\in\w$ and using ${\bf H}_1$ \ref{h11} we have
\begin{align*}
\|u_2^-\|^p &\le \langle\fpl u_2,-u_2^-\rangle \\
&= \int_{\{u_2<0\}} f(x,u_2)u_2\,dx \le 0,
\end{align*}
hence $u_2\ge 0$ in $\Omega$. Now apply Proposition \ref{scp} \ref{scp1} (with $g(t)=0$) and recall that $u_2\neq 0$, to conclude that $u_2\in{\rm int}(\cs_+)$, which ends the proof.
\end{proof}

\begin{remark}\label{mpt}
In the proof of Theorem \ref{coe}, the degree-theoretical argument used to detect a second positive solution can be replaced by an equivalent variational one, based on the mountain pass theorem (see \cite[Theorem 5.40]{MMP}). Also, we note that under hypotheses symmetric to ${\bf H}_1$ on the negative semiaxis, we can prove existence of two negative solutions.
\end{remark}

\section{Noncoercive case}\label{sec5}

\noindent
In this section we consider the more delicate case in which the limits of
\[t \mapsto \frac{f(x,t)}{t^{p-1}},\]
both for $t\to\infty,0^+$ lie above the principal eigenvalue $\lambda_1$. Such asymptotic behavior prevents both coercivity of the energy functional and the existence of a local minimum at $0$. So, in the present case the use of degree theory is more meaningful.
\vskip2pt
\noindent
Our hypotheses on the reaction $f$ are the following:
\begin{itemize}[leftmargin=1cm]
\item[${\bf H}_2$] $f:\Omega\times\R\to\R$ is a Carath\'eodory function and
\begin{enumroman}
\item\label{h21} for all $M>0$ there exists $a_M\in L^\infty(\Omega)_+$ s.t.\ for a.e.\ $x\in\Omega$ and all $t\in[0,M]$
\[|f(x,t)| \le a_M(x);\]
\item\label{h22} there exist $\theta_1,\theta_2\in L^\infty(\Omega)_+$ s.t.\ $\theta_1\ge\lambda_1$ in $\Omega$, $\theta_1\neq\lambda_1$, and uniformly for a.e.\ $x\in\Omega$
\[\theta_1(x) \le \liminf_{t\to\infty}\frac{f(x,t)}{t^{p-1}} \le \limsup_{t\to\infty}\frac{f(x,t)}{t^{p-1}} \le \theta_2(x);\]
\item\label{h23} there exist $\eta_1,\eta_2\in L^\infty(\Omega)_+$ s.t.\ $\eta_1\ge\lambda_1$ in $\Omega$, $\eta_1\neq\lambda_1$, and uniformly for a.e.\ $x\in\Omega$
\[\eta_1(x) \le \liminf_{t\to 0^+}\frac{f(x,t)}{t^{p-1}} \le \limsup_{t\to 0^+}\frac{f(x,t)}{t^{p-1}} \le \eta_2(x);\]
\item\label{h24} there exist $b,K>0$ s.t.\ uniformly for a.e.\ $x\in\Omega$
\[\limsup_{t\to b^-}\frac{f(x,t)}{(b-t)^{p-1}} \le K.\]
\end{enumroman}
\end{itemize}
As in Section \ref{sec4}, by ${\bf H}_2$ \ref{h23} we may set for all $x\in\Omega$, $t\le 0$
\[f(x,t) = 0.\]
So ${\bf H}_2$ imply ${\bf H}_0$. By ${\bf H}_2$ \ref{h22} \ref{h23}, $f(x,\cdot)$ has precisely a $(p-1)$-linear growth both at $0$ and at $\infty$, with limits slopes above the principal eigenvalue $\lambda_1$ (Proposition \ref{pev}) without resonance. Also, hypothesis ${\bf H}_2$ \ref{h24} implies that the constant $b$ is a supersolution of \eqref{dir}.

\begin{example}
Define an autonomous reaction $f\in C^0(\R)$ by setting for all $t\ge 0$
\[f(t) = \mu t^{p-1}-\gamma\arctan(t^{q-1}),\]
with $q>p$, $\mu>\lambda_1$, and $\gamma>0$. Elementary calculus shows that $f$  satisfies ${\bf H}_2$ \ref{h21} -- \ref{h23}. Moreover, if $\gamma$ is big enough (depending on $\mu$, $p$, and $q$), then $f$ becomes negative at some $b>0$, hence ${\bf H}_2$ \ref{h24} is satisfied as well.
\end{example}

\noindent
Our multiplicity theorem for the noncoercive case, extending \cite[Theorem 11.15]{MMP} to the nonlocal framework (see also the second part of \cite[Theorem 1]{MMP1}), is the following:

\begin{theorem}\label{nco}
Let ${\bf H}_2$ hold. Then, problem \eqref{dir} has at least two solutions $u_1,u_2\in{\rm int}(\cs_+)$.
\end{theorem}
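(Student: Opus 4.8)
The plan is to obtain the first solution $u_1 \in \mathrm{int}(\cs_+)$ as a local minimizer of the energy functional restricted by the supersolution $b$ from ${\bf H}_2$ \ref{h24}, and the second solution $u_2$ via a degree-theoretic computation that exploits the noncoercive asymptotic behavior. I would begin by truncating: set $\hat f(x,t) = f(x,\min\{t^+, b\})$, with primitive $\hat F$ and functional $\hat\Phi(u) = \|u\|^p/p - \int_\Omega \hat F(x,u)\,dx$. Since $\hat f$ is bounded (by ${\bf H}_2$ \ref{h21} and the cutoff at $b$), $\hat\Phi$ is coercive and sequentially weakly l.s.c., hence admits a global minimizer $u_1$. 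Testing the Euler equation $\fpl u_1 = \hat f(x,u_1)$ with $-u_1^-$ gives $u_1 \ge 0$; testing with $(u_1 - b)^+$ and using ${\bf H}_2$ \ref{h24} (so that $\fpl b \ge f(x,b) = \hat f(x,b)$ weakly, with the limsup condition guaranteeing the sign) together with Proposition \ref{stm} gives $u_1 \le b$. Hence $u_1$ solves \eqref{dir}. To show $u_1 \ne 0$ I would use ${\bf H}_2$ \ref{h23}: since $\eta_1 \ge \lambda_1$, $\eta_1 \ne \lambda_1$, one has $\lambda_1(\eta_1) < 1$, so testing with $t\,e_1(\eta_1)$ for small $t>0$ makes $\hat\Phi$ negative, forcing $\hat\Phi(u_1) < 0 = \hat\Phi(0)$. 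Then Proposition \ref{reg} gives $u_1 \in \cs$, and applying Proposition \ref{scp} \ref{scp2} (comparing $u_1$ with the strict supersolution $b$, after perhaps strengthening ${\bf H}_2$ \ref{h24} to a strict inequality near $b$, or noting $u_1 \ne b$) yields $b - u_1 \in \mathrm{int}(\cs_+)$, in particular $u_1 \in \mathrm{int}(\cs_+)$. As in Theorem \ref{coe}, this shows $u_1$ is a local minimizer of the genuine $\Phi$ in $\cs$ (the order interval $[0,b]$ being a neighborhood of $u_1$ in $\cs$-topology via the $\mathrm{int}$ condition and $u_1 \ge 0$ — here one may need $u_1 \in \mathrm{int}(\cs_+)$ as well, which follows from Proposition \ref{scp} \ref{scp1} since $u_1 \ne 0$), and by Proposition \ref{svh} a local minimizer in $\w$.

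For the second solution I would run a degree computation on $\Phi'$. Assume $0$ and $u_1$ are isolated critical points (otherwise $\Phi$ has infinitely many and we are done). Since $u_1$ is a local minimizer and isolated, Proposition \ref{rab} \ref{rab1} gives $\deg_{(S)_+}(\Phi', B_\rho(u_1), 0) = 1$ for small $\rho$. Next I would compute the degree on a small ball $B_\rho(0)$: because the growth slope $\eta_1$ at $0$ lies strictly above $\lambda_1$, I expect $\deg_{(S)_+}(\Phi', B_\rho(0), 0) = 0$. The argument is a homotopy $h(t,u) = \fpl u - (1-t) N_f(u) - t\, s\, m(x)|u|^{p-2}u$ (or directly deforming $f$ to a $(p-1)$-homogeneous model with slope $> \lambda_1$) combined with the nonexistence of solutions to the homotopy equation on $\partial B_\rho(0)$: any nonzero solution of $\fpl u = \lambda m(x)|u|^{p-2}u + \text{lower order}$ with $\lambda \ge \lambda_1(m)$ must be sign-changing by Proposition \ref{amp} (applied with $\beta$ the positive lower-order remainder, noting $u \ge 0$ forces $u^- = 0$, contradiction), so $0$ is the only solution in a small ball; then one deforms to a map with no zeros in $B_\rho(0)$, giving degree $0$. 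Similarly, using the asymptotic slope $\theta_1 > \lambda_1$ from ${\bf H}_2$ \ref{h22}, I would show $\deg_{(S)_+}(\Phi', B_R(0), 0) = 0$ for large $R$: a Leray–Schauder-type homotopy to a $(p-1)$-homogeneous problem at infinity, with a priori bounds on solutions of the homotopy equation (these bounds come from Proposition \ref{amp} again — a solution would be sign-changing — plus a standard blow-up/normalization argument, i.e. if $\|u_n\| \to \infty$ then $u_n/\|u_n\|$ converges to a nonnegative eigenfunction-type limit with slope $\ge \lambda_1$, contradicting Proposition \ref{amp}).

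With these four degree values in hand, choose $0 < \rho < R$ so that $\overline B_\rho(0) \cap \overline B_\rho(u_1) = \emptyset$ and both lie in $B_R(0)$. By domain additivity (Proposition \ref{deg} \ref{deg2}) applied twice,
\[
0 = \deg_{(S)_+}(\Phi', B_R(0), 0) = \deg_{(S)_+}(\Phi', B_\rho(0), 0) + \deg_{(S)_+}(\Phi', B_\rho(u_1), 0) + \deg_{(S)_+}(\Phi', W, 0),
\]
where $W = B_R(0) \setminus (\overline B_\rho(0) \cup \overline B_\rho(u_1))$, hence $\deg_{(S)_+}(\Phi', W, 0) = 0 - 0 - 1 = -1 \ne 0$. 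By the solution property (Proposition \ref{deg} \ref{deg5}) there exists $u_2 \in W$ with $\Phi'(u_2) = 0$; in particular $u_2 \ne 0$ and $u_2 \ne u_1$. Finally, as in Theorem \ref{coe}, testing with $-u_2^-$ and using ${\bf H}_2$ \ref{h21} gives $u_2 \ge 0$, Proposition \ref{reg} gives $u_2 \in \cs$, and Proposition \ref{scp} \ref{scp1} (with $g = 0$) gives $u_2 \in \mathrm{int}(\cs_+)$ since $u_2 \ne 0$.

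The main obstacle is the degree computation at infinity, $\deg_{(S)_+}(\Phi', B_R(0), 0) = 0$. One must construct an admissible $(S)_+$-homotopy deforming $f$ to a purely $(p-1)$-homogeneous reaction with slope above $\lambda_1$ while maintaining the crucial property that $0$ is the unique solution of every intermediate equation on and outside $\partial B_R(0)$; this requires uniform a priori bounds for the homotopy family, which is where the antimaximum principle Proposition \ref{amp} does the heavy lifting — a would-be large solution, after rescaling, converges to a nonnegative solution of a limiting problem with slope $\ge \lambda_1$, which Proposition \ref{amp} forbids. Care is also needed because $\theta_1, \theta_2$ need not be proportional, so the limiting problem is not a clean eigenvalue problem; handling the nonhomogeneous remainder uniformly (keeping it in $L^\infty(\Omega)_+$) is the delicate point, and is the analogue of the corresponding step in \cite[Theorem 11.15]{MMP} and \cite{FI1}.
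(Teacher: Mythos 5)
Your proposal follows essentially the same architecture as the paper's proof: truncate $f$ at the supersolution $b$ to get a coercive functional whose global minimizer $u_1$ satisfies $0\le u_1\le b$ and is a local minimizer of the true $\Phi$; compute $\deg_{(S)_+}(\Phi',B_R(0),0)=0$ and $\deg_{(S)_+}(\Phi',B_\rho(0),0)=0$ by homotoping to $(p-1)$-homogeneous models with weights above $\lambda_1$ (a priori bounds via normalization/blow-up, nonexistence of nonnegative solutions via the nodality of nonprincipal eigenfunctions and Proposition \ref{amp}); then domain additivity yields degree $-1$ on the annular region and a second solution. Two remarks. First, a genuine (small) difference: you prove $u_1\neq 0$ directly by testing $\hat\Phi$ with $t\,e_1(\eta_1)$ and using $\lambda_1(\eta_1)<1$, whereas the paper tolerates $u_1=0$ until the end and rules it out by comparing $\deg_{(S)_+}(\Phi',B_\rho(0),0)=0$ with $\deg_{(S)_+}(\Phi',B_\rho(u_1),0)=1$; your energy estimate is a valid and arguably more transparent shortcut, provided you note that $t\|e_1(\eta_1)\|_\infty\le\delta<b$ so that $\hat F=F$ there. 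Second, two slips in the comparison-principle steps: (a) no strengthening of ${\bf H}_2$ \ref{h24} is needed to get $b-u_1\in{\rm int}(\cs_+)$ --- the stated bound $\limsup_{t\to b^-}f(x,t)/(b-t)^{p-1}\le K$, combined with ${\bf H}_2$ \ref{h21} on $[0,\delta]$, gives $\check f(x,t)\le C(b-t)^{p-1}$ on $[0,b]$, which is exactly the differential inequality required by Proposition \ref{scp} \ref{scp2} with $g(t)=-C((b-t)^+)^{p-1}$; and (b) your final application of Proposition \ref{scp} \ref{scp1} with $g=0$ to conclude $u_2\in{\rm int}(\cs_+)$ is not admissible, since under ${\bf H}_2$ the reaction may be negative (indeed $f(\cdot,b)\le 0$), so $\fpl u_2\ge 0$ need not hold; one must instead use ${\bf H}_2$ \ref{h21}, \ref{h23} to get $f(x,t)\ge -Ct^{p-1}$ on $[0,\|u_2\|_\infty]$ and apply the principle with $g(t)=C(t^+)^{p-1}$, as you implicitly do elsewhere. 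Both are local fixes that do not affect the overall correctness of the strategy.
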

\begin{proof}
Since ${\bf H}_0$ holds, we can define $\Phi\in C^1(\w)$ as in Section \ref{sec3}. First, we note that by ${\bf H}_2$ \ref{h22} we have $f(x,0)=0$ for a.e.\ $x\in\Omega$, so $0$ is a critical point of $\Phi$.
\vskip2pt
\noindent
As anticipated above, the constant function $b\in\widetilde{W}^{s,p}(\Omega)$ is a (strict) supersolution of \eqref{dir}. Indeed, by ${\bf H}_2$ \ref{h24} we have $f(x,b)\le 0$ for a.e.\ $x\in\Omega$, hence for all $\varphi\in\w_+$
\[\langle\fpl b,\varphi\rangle = 0 \ge \int_\Omega f(x,b)\varphi\,dx.\]
Set for all $(x,t)\in\Omega\times\R$
\[\check f(x,t) = f\big(x,\min\{t,b\}\big), \ \check F(x,t) = \int_0^t\check f(x,\tau)\,d\tau.\]
Then, $\check f:\Omega\times\R\to\R$ satisfies ${\bf H}_0$. So set for all $u\in\w$
\[\check\Phi(u) = \frac{\|u\|^p}{p}-\int_\Omega\check F(x,u)\,dx.\]
As seen in Section \ref{sec3}, $\check\Phi\in C^1(\w)$ is sequentially weakly l.s.c.\ In addition, such functional is coercive. Indeed, by ${\bf H}_2$ \ref{h21} and the definition of $\check f$ we have for a.e.\ $x\in\Omega$ and all $t\ge b$
\[\check F(x,t) = \int_0^b f(x,\tau)\,d\tau+\int_b^t f(x,b)\,d\tau \le a_b(x)b\]
(recall that $f(\cdot,b)\le 0$ in $\Omega$), which implies for a.e.\ $x\in\Omega$ and all $t\in\R$
\[\check F(x,t) \le C.\]
So, by the continuous embedding $\w\hookrightarrow L^1(\Omega)$ we have for all $u\in\w$
\[\check\Phi(u) \ge \frac{\|u\|^p}{p}-C,\]
and the latter tends to $\infty$ as $\|u\|\to\infty$. Thus, there exists $u_1\in\w$ s.t.\
\beq\label{nco1}
\check\Phi(u_1) = \inf_{u\in\w}\check\Phi(u).
\eeq
By \eqref{nco1} we have weakly in $\Omega$
\beq\label{nco2}
\fpl u_1 = \check f(x,u_1).
\eeq
Since $\check f$ satisfies ${\bf H}_0$, by Proposition \ref{reg} we have $u_1\in C^\alpha_s(\overline\Omega)$. Recalling that $f(\cdot,t)=0$ in $\Omega$ for all $t\le 0$, and testing \eqref{nco2} with $-u_1^-\in\w$ we have
\begin{align*}
\|u_1^-\|^p &\le \langle\fpl u_1,-u_1^-\rangle \\
&= \int_{\{u_1<0\}}f(x,u_1)u_1\,dx = 0,
\end{align*}
hence $u_1\ge 0$ in $\Omega$. On the other hand, by ${\bf H}_2$ \ref{h24} we have $f(\cdot,b)\le 0$ in $\Omega$. So, testing \eqref{nco2} with $(u_1-b)^+\in\w$ we have
\begin{align*}
\langle\fpl u_1-\fpl b,(u_1-b)^+\rangle &= \int_\Omega \check f(x,u_1)(u_1-b)^+\,dx \\
&= \int_{\{u_1>b\}} f(x,b)(u_1-b)\,dx \le 0,
\end{align*}
hence $u_1\le b$ in $\Omega$. This already implies that we can replace $\check f$ with $f$ in \eqref{nco2} and see that $u_1\in C^\alpha_s(\overline\Omega)$ solves \eqref{dir} (see Proposition \ref{reg}). Note that, {\em so far}, $u_1=0$ may occur. Set
\[V = \big\{v\in\w\cap\cs:\,v<b \ \text{in $\Omega$}\big\}.\]
We aim at proving that $V$ is a neighborhood of $u_1$ in the $\cs$-topology, distinguishing two cases:
\begin{itemize}[leftmargin=1cm]
\item[$(a)$] If $u_1=0$, then clearly there exists $\sigma>0$ s.t.\ for all $v\in\w\cap\cs$ with $\|v\|_{0,s}\le\sigma$ we have $v<b$ in $\Omega$, hence $v\in V$.
\item[$(b)$] If $u_1\neq 0$, then we have $u_1>0$ in $\Omega$. To see this, fix $\eps\in(0,\lambda_1)$. By ${\bf H}_2$ \ref{h23} there exists $\delta\in(0,b)$ s.t.\ for a.e.\ $x\in\Omega$ and all $t\in[0,\delta]$
\[f(x,t) > \eps t^{p-1}.\]
Besides, by ${\bf H}_2$ \ref{h21} we have for a.e.\ $x\in\Omega$ and all $t\in[\delta,b]$
\[f(x,t) \ge -a_b(x) \ge -\frac{\|a_b\|_\infty}{\delta^{p-1}}t^{p-1}.\]
Recalling the construction of $\check f$, we find $C>0$ s.t.\ for a.e.\ $x\in\Omega$ and all $t\ge 0$
\[\check f(x,t) \ge -Ct^{p-1}.\]
By \eqref{nco2} we have weakly in $\Omega$
\[\fpl u_1+Cu_1^{p-1} \ge 0,\]
while $u_1\ge 0$ in $\Omega$ and $u_1\neq 0$. By Proposition \ref{scp} \ref{scp1} (with $g(t)=C(t^+)^{p-1}$) we have
\[\inf_\Omega\frac{u_1}{\ds} > 0.\]
In particular, we have $u_1\in{\rm int}(\cs_+)$. Now, let us compare $u_1$ to $b$. By ${\bf H}_2$ \ref{h24}, for all $\eps>0$ there exists $\delta\in(0,b)$ s.t.\ for a.e.\ $x\in\Omega$ and all $t\in[\delta,b]$
\[f(x,t) \le (K+\eps)(b-t)^{p-1}.\]
Besides, by ${\bf H}_2$ \ref{h21} we have for a.e.\ $x\in\Omega$ and all $t\in[0,\delta]$
\[f(x,t) \le a_\delta(x) \le \frac{\|a_\delta\|_\infty}{(b-\delta)^{p-1}}(b-t)^{p-1}.\]
So we can find $C>0$ s.t.\ for a.e.\ $x\in\Omega$ and all $t\in[0,b]$
\[\check f(x,t) \le C(b-t)^{p-1}.\]
Now recall that $0<u_1\le b$ in $\Omega$ and \eqref{nco2} holds, so we have weakly in $\Omega$
\[\fpl u_1-C(b-u_1)^{p-1} \le 0 = \fpl b-C(b-b)^{p-1}.\]
By Proposition \ref{scp} \ref{scp2} (with $g(t)=-C((b-t)^+)^{p-1}$) we have
\[\inf_\Omega\frac{b-u_1}{\ds} > 0.\]
So, we can find $\sigma>0$ s.t.\ for all $v\in\w\cap\cs$ with $\|v-u_1\|_{0,s}\le\sigma$ we have $v\in V$.
\end{itemize}
By \eqref{nco1} and the estimates above, for all $v\in V$ we have
\[\Phi(v) = \check\Phi(v) \ge \check\Phi(u_1) = \Phi(u_1).\]
So, $u_1$ is a local minimizer of $\Phi$ in $\cs$. By Proposition \ref{svh}, it is as well a local minimizer of $\Phi$ in $\w$.
\vskip2pt
\noindent
We seek now another critical point of $\Phi$, using degree theory. First, note that by ${\bf H}_2$ \ref{h22} we can find $M,C>0$ s.t.\ for a.e.\ $x\in\Omega$ and all $t\ge M$
\[|f(x,t)| \le Ct^{p-1}.\]
Also, by ${\bf H}_2$ \ref{h21} there exists $a_M\in L^\infty(\Omega)_+$ s.t.\ for a.e.\ $x\in\Omega$ and all $t\in[0,M]$
\[|f(x,t)| \le a_M(x).\]
Recalling that $f(\cdot,t)=0$ in $\Omega$ for all $t\le 0$, we have for a.e.\ $x\in\Omega$ and all $t\in\R$
\beq\label{nco3}
|f(x,t)| \le C(1+|t|^{p-1}),
\eeq
i.e., $f$ satisfies ${\bf H}_0$ (with $r=p$). Define the completely continuous map $N_f:\w\to W^{-s,p'}(\Omega)$ as in Section \ref{sec3}. So, $\Phi':\w\to W^{-s,p'}(\Omega)$ is a continuous $(S)_+$-map. Our first claim is that, for all $R>0$ big enough,
\beq\label{nco4}
{\rm deg}_{(S)_+}(\Phi',B_R(0),0) = 0.
\eeq
To see this, first let $\theta_1,\theta_2\in L^\infty(\Omega)_+$ be as in ${\bf H}_2$ \ref{h22}, then set for all $u,\varphi\in\w$
\[\langle K_\infty(u),\varphi\rangle = \int_\Omega\frac{\theta_1(x)+\theta_2(x)}{2}(u^+)^{p-1}\varphi\,dx.\]
So, $K_\infty:\w\to W^{-s,p'}(\Omega)$ is a completely continuous map. We define a continuous $(S)_+$-homotopy $h_\infty:[0,1]\times\w\to W^{-s,p'}(\Omega)$ by setting for all $(t,u)\in[0,1]\times\w$
\[h_\infty(t,u) = \fpl u-(1-t)N_f(u)-tK_\infty(u).\]
We prove next that for all $R>0$ big enough, $t\in[0,1]$, and $\|u\|=R$
\beq\label{nco5}
h_\infty(t,u) \neq 0.
\eeq
Arguing by contradiction, assume that there exist sequences $(t_n)$ in $[0,1]$ and $(u_n)$ in $\w$, s.t.\ $\|u_n\|\to\infty$ and for all $n\in\N$
\[h_\infty(t_n,u_n) = 0.\]
Passing if necessary to a subsequence, we have $t_n\to t$ for some $t\in[0,1]$. Also, testing the relation above with $-u_n^-\in\w$ and recalling that $f(\cdot,t)=0$ in $\Omega$ for all $t\le 0$, we have for all $n\in\N$
\begin{align*}
\|u_n^-\|^p &\le \langle\fpl u_n,-u_n^-\rangle \\
&= (1-t_n)\int_{\{u_n<0\}} f(x,u_n)u_n\,dx+t_n\int_{\{u_n<0\}}\frac{\theta_1(x)+\theta_2(x)}{2}(u_n^+)^{p-1}u_n\,dx = 0,
\end{align*}
so $u_n\ge 0$ in $\Omega$. Clearly we may assume $u_n\neq 0$, so set
\[v_n = \frac{u_n}{\|u_n\|} \in\w_+\setminus\{0\}.\]
For all $n\in\N$ we have weakly in $\Omega$
\beq\label{nco6}
\fpl v_n = (1-t_n)\frac{f(x,u_n)}{\|u_n\|^{p-1}}+t_n\frac{\theta_1(x)+\theta_2(x)}{2}v_n^{p-1}.
\eeq
Clearly $\|v_n\|=1$, so up to a subsequence we have $v_n\rightharpoonup v$ in $\w$, $v_n\to v$ in $L^p(\Omega)$, and $v_n(x)\to v(x)$ for a.e.\ $x\in\Omega$ with $L^p(\Omega)$-dominated convergence. Hence $v\ge 0$ in $\Omega$. Testing \eqref{nco6} with $(v_n-v)\in\w$, using \eqref{nco3} and H\"older's inequality, we get for all $n\in\N$
\begin{align*}
\langle\fpl v_n,v_n-v\rangle &= (1-t_n)\int_\Omega\frac{f(x,u_n)}{\|u_n\|^{p-1}}(v_n-v)\,dx+t_n\int_\Omega\frac{\theta_1(x)+\theta_2(x)}{2}v_n^{p-1}(v_n-v)\,dx \\
&\le C\frac{1+\|u_n\|_p^{p-1}}{\|u_n\|^{p-1}}\|v_n-v\|_p+C\|v_n\|_p^{p-1}\|v_n-v\|_p \le C\|v_n-v\|_p,
\end{align*}
and the latter tends to $0$ as $n\to\infty$. By the $(S)_+$-property of $\fpl$, we have $v_n\to v$ in $\w$, in particular $\|v\|=1$. By \eqref{nco3} we have for all $n\in\N$
\[\int_\Omega\Big|\frac{f(x,u_n)}{\|u_n\|^{p-1}}\Big|^{p'}\,dx \le C\frac{1+\|u_n\|_p^p}{\|u_n\|^p} \le C.\]
By reflexivity, we can find $g_\infty\in L^{p'}(\Omega)$ s.t.\ up to a subsequence we have in $L^{p'}(\Omega)$
\[\frac{f(\cdot,u_n)}{\|u_n\|^{p-1}} \rightharpoonup g_\infty.\]
Fix $\eps>0$ and set for all $n\in\N$
\[\Omega^\eps_n = \Big\{x\in\Omega:\,u_n(x)>0, \ \theta_1(x)-\eps\le\frac{f(x,u_n(x))}{u_n(x)^{p-1}}\le\theta_2(x)+\eps\Big\}.\]
Also set
\[\Omega^+ = \big\{x\in\Omega:\,v(x)>0\big\}.\]
Then we have $u_n(x)=\|u_n\|v_n(x)\to\infty$ for a.e.\ $x\in\Omega^+$, so by ${\bf H}_2$ \ref{h22} we have $\chi_{\Omega^\eps_n}\to 1$ in $\Omega^+$, with dominated convergence. So we have in $L^{p'}(\Omega^+)$
\beq\label{nco7}
\chi_{\Omega^\eps_n}\frac{f(\cdot,u_n)}{\|u_n\|^{p-1}} \rightharpoonup g_\infty.
\eeq
Besides, by definition of $\Omega^\eps_n$, for all $n\in\N$ we have in $\Omega^+$
\[\chi_{\Omega^\eps_n}(\theta_1-\eps)v_n^{p-1} \le \chi_{\Omega^\eps_n}\frac{f(\cdot,u_n)}{\|u_n\|^{p-1}} \le \chi_{\Omega^\eps_n}(\theta_2+\eps)v_n^{p-1}.\]
By Mazur's theorem (see \cite[Corollary 3.8]{B}), up to a convex combination we have strong convergence in \eqref{nco7}, so we can pass to the limit as $n\to\infty$ in the inequalities above and find in $\Omega^+$
\[(\theta_1-\eps)v^{p-1} \le g_\infty \le (\theta_2+\eps)v^{p-1}.\]
Let $\eps\to 0^+$ to find in $\Omega^+$
\[\theta_1v^{p-1} \le g_\infty \le \theta_2v^{p-1}.\]
Besides, in $\Omega	\setminus\Omega^+$ we have $v_n\to 0$. By \eqref{nco3} and the definition of $v_n$, for all $n\in\N$ we have in $\Omega\setminus\Omega^+$
\[\Big|\frac{f(x,u_n)}{\|u_n\|^{p-1}}\Big| \le C\frac{1+u_n^{p-1}}{u_n^{p-1}}v_n^{p-1} \le Cv_n^{p-1},\]
and the latter tends to $0$ as $n\to\infty$. So we may set $g_\infty=0$ in $\Omega\setminus\Omega^+$. In conclusion, there exists $\theta\in L^\infty(\Omega)$ s.t.\ in $\Omega$
\[\theta_1 \le \theta \le \theta_2, \ g_\infty = \theta v^{p-1}.\]
Besides, we have in $L^{p'}(\Omega)$
\[\frac{\theta_1+\theta_2}{2}v_n^{p-1} \to \frac{\theta_1+\theta_2}{2}v^{p-1}.\]
Define now
\[m_\infty = (1-t)\theta+t\frac{\theta_1+\theta_2}{2} \in L^\infty(\Omega).\]
Passing to the limit in \eqref{nco6} as $n\to\infty$, we have weakly in $\Omega$
\[\fpl v = m_\infty(x)v^{p-1}.\]
So $v\neq 0$ is an eigenfunction of problem \eqref{evp} with weight $m=m_\infty$, associated to the eigenvalue $1$. By ${\bf H}_2$ \ref{h22} and the computation above we have $m_\infty\ge\lambda_1$ in $\Omega$, $m_\infty\neq\lambda_1$, so by Proposition \ref{pev} \ref{pev2} we have
\[\lambda_1(m_\infty) < \lambda_1(\lambda_1) = 1.\]
Thus, $v$ is a non-principal eigenfunction, which along with Proposition \ref{pev} \ref{pev1} implies that $v$ is nodal, a contradiction. So \eqref{nco5} is proved.
\vskip2pt
\noindent
Using \eqref{nco5} and Proposition \ref{deg} \ref{deg4} (homotopy invariance), we see that for all $R>0$ big enough
\beq\label{nco8}
{\rm deg}_{(S)_+}(\Phi',B_R(0),0) = {\rm deg}_{(S)_+}(\fpl-K_\infty,B_R(0),0).
\eeq
In order to evaluate the latter quantity, we shall define a new homotopy. Let $w\in C^\infty_c(\Omega)_+\setminus\{0\}$ be identified with an element of $W^{-s,p'}(\Omega)$, and set for all $(t,u)\in[0,1]\times\w$
\[\tilde h_\infty(t,u) = \fpl u-K_\infty(u)-tw.\]
Clearly $\tilde h_\infty:[0,1]\times\w\to W^{-s,p'}(\Omega)$ is a continuous $(S)_+$-homotopy. Moreover, for all $t\in[0,1]$ and all $u\in\w\setminus\{0\}$ we have
\beq\label{nco9}
\tilde h_\infty(t,u) \neq 0.
\eeq
Arguing by contradiction, let $\tilde h_\infty$ vanish at some $t\in[0,1]$, $u\in\w\setminus\{0\}$. Then we have weakly in $\Omega$
\beq\label{nco10}
\fpl u = \frac{\theta_1(x)+\theta_2(x)}{2}(u^+)^{p-1}+tw(x).
\eeq
Testing \eqref{nco10} with $-u^-\in\w$ we have
\begin{align*}
\|u^-\|^p &\le \langle\fpl u,-u^-\rangle &\\
&= \int_{\{u<0\}}\frac{\theta_1(x)+\theta_2(x)}{2}(u^+)^{p-1}u\,dx+t\int_{\{u<0\}}w(x)u\,dx \le 0,
\end{align*}
so $u\ge 0$ in $\Omega$. Now set
\[m = \frac{\theta_1+\theta_2}{2} \in L^\infty(\Omega),\]
and note that by ${\bf H}_2$ \ref{h22} $m\ge\lambda_1$ in $\Omega$, with $m\neq\lambda_1$. By Proposition \ref{pev} \ref{pev2} we have
\[\lambda_1(m) < \lambda_1(\lambda_1) = 1.\]
Now we distinguish two cases:
\begin{itemize}[leftmargin=1cm]
\item[$(a)$] If $t=0$, then by \eqref{nco10} we see that $u$ is an eigenfunction of problem \eqref{pev} with weight $m$, associated to the non-principal eigenvalue $1$, hence $u$ must be nodal, a contradiction.
\item[$(b)$] If $t\in(0,1]$, then we apply Proposition \ref{amp} with $m$ as above, $\lambda=1$, and $\beta=tw$, and by \eqref{nco10} we deduce that $u^-\neq 0$, a contradiction again.
\end{itemize}
So \eqref{nco9} is proved. By Proposition \ref{deg} \ref{deg4} (homotopy invariance) we have for all $R>0$
\beq\label{nco11}
{\rm deg}_{(S)_+}(\fpl-K_\infty,B_R(0),0) = {\rm deg}_{(S)_+}(\fpl-K_\infty-w,B_R(0),0).
\eeq
In addition, by \eqref{nco9} and Proposition \ref{deg} \ref{deg5} (solution) we have for all $R>0$
\[{\rm deg}_{(S)_+}(\fpl-K_\infty-w,B_R(0),0) = 0.\]
Concatenating \eqref{nco8}, \eqref{nco11}, and the above relation we achieve the proof of \eqref{nco4}.
\vskip2pt
\noindent
Now we study the behavior of $\Phi'$ near the origin, where for all $\rho>0$ small enough we have
\beq\label{nco12}
{\rm deg}_{(S)_+}(\Phi',B_\rho(0),0) = 0.
\eeq
Due to the similarity of hypotheses ${\bf H}_2$ \ref{h22} \ref{h23}, we may prove \eqref{nco12} with an analogous argument to that used for \eqref{nco4}, so we omit the details. Set for all $u,\varphi\in\w$
\[\langle K_0(u),\varphi\rangle = \int_\Omega\frac{\eta_1(x)+\eta_2(x)}{2}(u^+)^{p-1}\varphi\,dx,\]
with $\eta_1,\eta_2\in L^\infty(\Omega)_+$ as in ${\bf H}_2$ \ref{h23}. Also, set for all $(t,u)\in[0,1]\times\w$
\[h_0(t,u) = \fpl u-(1-t)N_f(u)-tK_0(u).\]
Then, $h_0:[0,1]\times\w\to W^{-s,p'}(\Omega)$ is a continuous $(S)_+$-homotopy s.t.\ or all $t\in[0,1]$ and all $\|u\|>0$ small enough
\[h_0(t,u) \neq 0.\]
By homotopy invariance we have for all $\rho>0$ small enough
\[{\rm deg}_{(S)_+}(\Phi',B_\rho(0),0) = {\rm deg}_{(S)_+}(\fpl-K_0,B_\rho(0),0).\]
In addition, fix as before $w\in C^\infty_c(\Omega)_+\setminus\{0\}$ and set for all $(t,u)\in[0,1]\times\w$
\[\tilde h_0(t,u) = \fpl u-K_0(u)-tw.\]
So we have for all $t\in[0,1]$ and all $u\in\w\setminus\{0\}$
\[\tilde h_0(t,u) \neq 0.\]
By the homotopy invariance and solution properties, then, for all $\rho>0$
\[{\rm deg}_{(S)_+}(\fpl-K_0,B_\rho(0),0) = {\rm deg}_{(S)_+}(\fpl-K_0-w,B_\rho(0),0) = 0.\]
Concatenating the relations above, we get \eqref{nco12}.
\vskip2pt
\noindent
Finally, we go back to the solution $u_1\in\w_+$ detected in \eqref{nco1}, recalling that it is a local minimizer of $\Phi$. Once again we distinguish two cases:
\begin{itemize}[leftmargin=1cm]
\item[$(a)$] If $u_1$ is not an isolated critical point of $\Phi$, then clearly there exist infinitely many nontrivial critical points of such functional.
\item[$(b)$] If $u_1$ is an isolated critical point of $\Phi$, then it is in particular a {\em strict} local minimizer. So, by Proposition \ref{rab} \ref{rab1} we have for all $\rho>0$ small enough
\beq\label{nco13}
{\rm deg}_{(S)_+}(\Phi',B_\rho(u_1),0) = 1.
\eeq
Comparing \eqref{nco12} and \eqref{nco13} we see that $u_1\neq 0$. Also, fixing $\rho>0$ small enough and $R>0$ big enough we may have
\[\overline{B}_\rho(0)\cap\overline{B}_\rho(u_1) = \emptyset, \ \overline{B}_\rho(0)\cup\overline{B}_\rho(u_1) \subset B_R(0),\]
as well as \eqref{nco4} \eqref{nco12} \eqref{nco13}. Applying twice Proposition \ref{deg} \ref{deg2} (domain additivity) we get
\begin{align*}
0 &= {\rm deg}_{(S)_+}(\Phi',B_R(0),0) \\
&= {\rm deg}_{(S)_+}(\Phi',B_\rho(0),0)+{\rm deg}_{(S)_+}(\Phi',B_\rho(u_1),0)+{\rm deg}_{(S)_+}\big(\Phi',B_R(0)\setminus(\overline{B}_\rho(0)\cup\overline{B}_\rho(u_1)),0\big) \\
&= 1+{\rm deg}_{(S)_+}\big(\Phi',B_R(0)\setminus(\overline{B}_\rho(0)\cup\overline{B}_\rho(u_1)),0\big),
\end{align*}
that is,
\[{\rm deg}_{(S)_+}\big(\Phi',B_R(0)\setminus(\overline{B}_\rho(0)\cup\overline{B}_\rho(u_1)),0\big) = -1.\]
By Proposition \ref{deg} \ref{deg5} (solution), there exists $u_2\in B_R(0)\setminus(\overline{B}_\rho(0)\cup\overline{B}_\rho(u_1))$ s.t.\
\[\Phi'(u_2) = 0.\]
\end{itemize}
In either case, there exist (at least) two nontrivial critical points $u_1,u_2\in K(\Phi)\setminus\{0\}$. Choose $i\in\{1,2\}$ and argue as before. By Proposition \ref{reg} we have $u\in C^\alpha_s(\overline\Omega)$ satisfies weakly in $\Omega$
\[\fpl u_i = f(x,u_i).\]
Testing with $-u_i\in\w$ we have as above
\begin{align*}
\|u_i^-\|^p &\le \langle\fpl u_i,-u_i^-\rangle \\
&= \int_{\{u_i<0\}}f(x,u_i)u_i\,dx = 0,
\end{align*}
hence $u_i\ge 0$ in $\Omega$. In addition, by ${\bf H}_2$ \ref{h21} \ref{h23} we have for a.e.\ $x\in\Omega$ and all $t\in[0,\|u_i\|_\infty]$
\[f(x,t) \ge -Ct^{p-1}.\]
So we have weakly in $\Omega$
\[\fpl u_i+Cu_i^{p-1} \ge 0.\]
Applying Proposition \ref{scp} \ref{scp1} (with $g(t)=(t^+)^{p-1}$), we deduce that $u_i\in{\rm int}(\cs)$, which concludes the proof.
\end{proof}

\begin{remark}\label{neg}
Under hypotheses symmetric to ${\bf H}_2$, we can prove that \eqref{dir} admits two negative solutions.
\end{remark}

\vskip4pt
\noindent
{\bf Acknowledgement.} The author is a member of GNAMPA (Gruppo Nazionale per l'Analisi Matematica, la Probabilit\`a e le loro Applicazioni) of INdAM (Istituto Nazionale di Alta Matematica 'Francesco Severi'). He is partially supported by the research projects: {\it Evolutive and Stationary Partial Differential Equations with a Focus on Biomathematics} (Fondazione di Sardegna 2019), {\em Analysis of PDEs in connection with real phenomena} (CUP F73C22001130007, Fondazione di Sardegna 2021), {\em Nonlinear Differential Problems via Variational, Topological and Set-valued Methods} (PRIN-2017AYM8XW). The authors wishes to thank the anonymous Referee for her/his careful reading of the work and precious suggestions.


\begin{thebibliography}{99}

\bibitem{B}
{\sc H.\ Brezis,}
Functional analysis, Sobolev spaces and partial differential equations,
Springer, New York (2011).

\bibitem{B1}
{\sc F.E.\ Browder,}
Degree of mapping for nonlinear operators of monotone type,
{\em Proc. Nat. Acad. Sci. U.S.A.} {\bf 80} (1983) 1771--1773.

\bibitem{CMS}
{\sc W.\ Chen, S.\ Mosconi, M.\ Squassina,}
Nonlocal problems with critical Hardy nonlinearity,
{\em J. Funct. Anal.} {\bf 275} (2018) 3065--3114.

\bibitem{DPV}
{\sc E.\ Di Nezza, G.\ Palatucci, E.\ Valdinoci},
Hitchhiker's guide to the fractional Sobolev spaces,
{\em Bull. Sci. Math.} {\bf 136} (2012) 521--573.

\bibitem{FI}
{\sc S.\ Frassu, A.\ Iannizzotto,}
Five solutions for the fractional $p$-Laplacian with noncoercive energy,
{\em Nonlinear Differ. Equ. Appl.} {\bf 29} (2022) art.\ 43.

\bibitem{FI1}
{\sc S.\ Frassu, A.\ Iannizzotto,}
Multiple solutions for the fractional $p$-Laplacian with jumping reactions,
{\em J. Fixed Point Theory Appl.} {\bf 25} (2023) art.\ 25.

\bibitem{GGP}
{\sc T.\ Godoy, J.P.\ Gossez, S.\ Paczka,} 
On the antimaximum principle for the $p$-Laplacian with indefinite weight,
{\em Nonlinear Anal.} {\bf 51} (2002) 449--467.

\bibitem{GD}
{\sc A.\ Granas, J.\ Dugundji,}
Fixed point theory,
Springer, New York (2003).

\bibitem{I}
{\sc A.\ Iannizzotto,}
Monotonicity of eigenvalues of the fractional $p$-Laplacian with singular weights,
{\em Topol. Methods Nonlinear Anal.} (2023) DOI: 10.12775/TMNA.2022.024.

\bibitem{ILPS}
{\sc A.\ Iannizzotto, S.\ Liu, K.\ Perera, M.\ Squassina},  
Existence results for fractional $p$-Laplacian problems via Morse theory,
{\em Adv. Calc. Var.} {\bf 9} (2016) 101--125.

\bibitem{IL}
{\sc A.\ Iannizzotto, R.\ Livrea,}
Four solutions for fractional $p$-Laplacian equations with asymmetric reactions,
{\em Mediterr. J. Math.} {\bf 18} (2021) art.\ 220.

\bibitem{IMP}
{\sc A.\ Iannizzotto, S.\ Mosconi, N.S.\ Papageorgiou,}
On the logistic equation for the fractional $p$-Laplacian, 
{\em Math. Nachr.} (2023) DOI: 10.1002/mana.202100025.

\bibitem{IMS}
{\sc A.\ Iannizzotto, S.\ Mosconi, M.\ Squassina},
Global H\"older regularity for the fractional $p$-Laplacian,
{\em Rev. Mat. Iberoam.} {\bf 32} (2016) 1353--1392.

\bibitem{IMS1}
{\sc A.\ Iannizzotto, S.\ Mosconi, M.\ Squassina},
Fine boundary regularity for the degenerate fractional $ p $-Laplacian,
{\em J. Funct. Anal.} {\bf 279} (2020) art.\ 108659.

\bibitem{IMS2}
{\sc A.\ Iannizzotto, S.\ Mosconi, M.\ Squassina},
Sobolev versus H\"{o}lder minimizers for the degenerate fractional $p$-Laplacian,
{\em Nonlinear Anal.} {\bf 191} (2020) art.\ 111635.

\bibitem{LL}
{\sc E.\ Lindgren, P.\ Lindqvist,}
Fractional eigenvalues,
{\em Calc. Var. Partial Differential Equations} {\bf 49} (2014) 795--826.

\bibitem{MRS}
{\sc G.\ Molica Bisci, V.D.\ R\u{a}dulescu, R.\ Servadei,}
Variational methods for nonlocal fractional problems,
Cambridge University Press, Cambridge (2016).

\bibitem{MMP}
{\sc D.\ Motreanu, V. V.\ Motreanu, N. S.\ Papageorgiou,} 
Topological and variational methods with applications to nonlinear boundary value problems,
Springer, New York (2014).

\bibitem{MMP1}
{\sc D.\ Motreanu, V. V.\ Motreanu, N. S.\ Papageorgiou,} 
A degree theoretic approach for multiple solutions of constant sign for nonlinear elliptic equations,
{\em Manuscripta Math.} {\bf 124} (2007) 507--531.

\bibitem{O}
{\sc M.\ \^{O}tani,}
Existence and nonexistence of nontrivial solutions of some nonlinear degenerate elliptic equations,
{\em J. Funct. Anal.} {\bf 76} (1988) 140--159.

\bibitem{R}
{\sc P.H.\ Rabinowitz,}
A note on topological degree for potential operators,
{\em J. Math. Anal. Appl.} {\bf 51} (1975) 483--492.

\bibitem{S}
{\sc M.\ Struwe,}
Variational methods,
Springer, Berlin (2008).

\end{thebibliography}
\end{document}